\documentclass[11pt,a4paper]{article}
\usepackage[utf8x]{inputenc}
\usepackage[T1]{fontenc}

\title{The fibrewise trivial locus of a line bundle}
\author{Amira Tlemsani}
\usepackage[pdftex]{graphicx} 
\usepackage[pdftex,linkcolor=black,pdfborder={0 0 0}]{hyperref} 
\usepackage{enumitem} 
\usepackage{xcolor} 
\usepackage{amsmath}
\usepackage{amsthm}
\usepackage{amssymb}
\usepackage{tikz-cd}
\usepackage{mathrsfs}

\usepackage{cleveref}
\usepackage{mathtools}

\usepackage[a4paper, lmargin=0.1666\paperwidth, rmargin=0.1666\paperwidth, tmargin=0.1111\paperheight, bmargin=0.1111\paperheight]{geometry} 

\usepackage[all]{nowidow} 
\usepackage[protrusion=true,expansion=true]{microtype} 
\usepackage{extarrows}

\usepackage{lipsum} 

\usepackage[toc,page]{appendix}

\hypersetup{ 	
pdfsubject = {},
pdftitle = {},
pdfauthor = {}
}

\newtheorem{theorem}{Theorem}
\newtheorem{lemma}[theorem]{Lemma}
\newtheorem{definition}[theorem]{Definition}
\newtheorem{remark}[theorem]{Remark}
\newtheorem{example}[theorem]{Example}
\newtheorem{proposition}[theorem]{Proposition}

\newtheorem*{example*}{Example}
\newtheorem*{acknowledgements}{Acknowledgements}

\begin{document} 

\maketitle 

\begin{abstract}
    Let $\mathcal{L}$ be a line bundle on a smooth and proper scheme $X$ over $S$. We compute, in the case where $S$ is smooth over a field of characteristic $0$, the virtual fundamental class of the closed subset of $S$ consisting of those $s\in S$ such that the restriction of $\mathcal{L}$ to the fibre $X_s$ over $s$ is trivial.
\end{abstract}

\newcommand{\J}{\mathcal{J}}
\newcommand{\C}{\mathcal{C}}
\newcommand{\Lu}{\mathcal{L}^u}
\newcommand{\on}[1]{\operatorname{#1}}
\newcommand{\Pic}{\on{Pic}^0_{X/S}}
\newcommand{\DR}{\operatorname{DR}}
\newcommand{\OP}{\mathsf{OP}}

\section{Introduction}

Let $X/S$ be a smooth projective morphism of schemes, and let $\mathcal{L}$ be a line bundle on $X$. Writing $X_s$ for the fibre of $X$ over $s$, the closed subset where $\mathcal{L}$ is fibrewise trivial
$$\{s\in S: \mathcal{L}_{|X_s}\simeq \mathcal{O}_{X_s}\} \subseteq S$$
can be equipped with a virtual fundamental class, which we call \textit{the double ramification cycle} of $\mathcal{L}$. This can be defined by the formula
$$\on{DR}(\mathcal{L}) \coloneqq \sigma^{*}e$$
where $\sigma, e$ are the sections of $\on{Pic}_{X/S} \longrightarrow S$ corresponding respectively to $\mathcal{L}$ and $\mathcal{O}_X$, assuming that $S$ is smooth over a field of characteristic $0$.

In the case where $X/S$ has relative dimension 1, but where the morphism is allowed mild singularities, the problem of computing the class of the double ramification cycle was posed by Eliashberg in 2001, and solved by Bae, Holmes, Pandharipande, Schmitt and Schwarz in 2021 \cite{bae2023pixton} building on work of Janda, Pixton, Pandharinde and Zvonkine \cite{Janda_2017} when $\mathcal{L} \simeq \mathcal{O}(\sum_i a_i)$. The case of curves has found many applications in enumerative geometry and integrable systems (see for example \cite{Bae_2020, Buryak_2019, Clader_2017, Fan_2020, Farkas_2016, Oberdieck_2018, 10.1007/978-3-319-94881-2_9, schmitt2018dimension, Tseng_2020, blot2024strong, Buryak_Rossi_2023, cavalieri2016hurwitz, molcho2024case, van2021gromov}). A natural next step would be to compute DR cycles in higher relative dimensions, starting with the smooth case, in the hopes that they too have applications in counting higher dimensional objects (see \cite{bae2025countingsurfacescalabiyau4folds} for an introduction on this). Our main result is the following:
\begin{theorem}\label{thm:dr1}
    Let $S$ be a smooth connected scheme over a field of characteristic $0$, and $\pi: X\longrightarrow S$ a smooth projective morphism of schemes. Fix a relatively ample line bundle $\mathcal{F}$ on $X/S$. Then for any algebraically trivial line bundle $\mathcal{L}$ on $X/S$:
    \begin{equation*}
        \on{DR}(\mathcal{L}) = \dfrac{1}{d}\cdot\dfrac{(-\pi_*(c_1(\mathcal{L})^{.2}\cdot c_1(\mathcal{F})^{.(n-1)}))^{.g}}{g!} \in \on{CH}_{\mathbb{Q}}^g(S),
    \end{equation*}
    where $n = \dim_S X$, $g = \dim_S\Pic$, and $d\in \mathbb{Z}_{>0}$ is the rank of a vector bundle on $S$ which we'll define in Section \ref{sect2}.
\end{theorem}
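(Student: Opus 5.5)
We reduce the computation to the theta divisor on an abelian scheme, where the pullback of the zero section is known explicitly. Since $\mathcal{L}$ is algebraically trivial, the section $\sigma$ lands in the identity component $A\coloneqq\Pic$, which (smooth projective case, characteristic $0$) is an abelian scheme of relative dimension $g$ over $S$.

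The key tool is the following fact for an abelian scheme $A/S$ with zero section $e$ and a symmetric, relatively ample line bundle $\Theta$ which is rigidified along $e$ and has Hilbert polynomial degree $\chi(\Theta_s)=d$. By the Beauville--Deninger--Murre decomposition, $[e(S)]$ lies in the top graded piece of $\on{CH}^g_{\mathbb{Q}}(A)$. Moreover, by the Grushevsky--Zakharov / Deninger--Murre style identity, $[e(S)] = \frac{1}{d}\frac{\theta^g}{g!}$ modulo classes that vanish under any section pullback. Here $\theta=c_1(\Theta)$ is normalised to be symmetric and $d$ is the rank of $\pi_*\Theta$. Making this precise is the main obstacle, because the identity holds only after projecting to the correct Beauville eigenspace. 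I would therefore argue with the Fourier transform as follows. For symmetric rigidified $\Theta$, the class $\ell=c_1(\Theta)$ lies in $\on{CH}^1_{(0)}$. The Fourier transform of $[e]$ is $[A]$, and the Fourier transform of $e^{\ell}$ is $d\cdot e^{-\hat\ell}$, where $\hat\ell$ is the dual class on $A^\vee$. Comparing weight components then gives $\sigma^*[e]=\frac1d\sigma^*\frac{\ell^g}{g!}$ for any section $\sigma$ whose class has Beauville weight $0$. To make that last step work I would use that $\sigma^*$ of weight-$(s\neq0)$ pieces vanishes after base change by multiplication by $N$ combined with $[N]\circ\sigma=\sigma_{\mathcal{L}^N}$. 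Homogeneity in $N$ then forces the relevant component to be the one stated.

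Next I build $\Theta$ from $\mathcal{F}$. Choose the rigidified Poincaré bundle $\mathcal{P}$ on $X\times_S A$ and set
\[
\Theta\coloneqq \det R p_{A*}\bigl(\mathcal{P}\otimes \mathcal{F}^{\otimes m}\bigr)^{-1}
\]
or, more canonically, the bundle whose first Chern class is $-p_{A*}(c_1(\mathcal{P})^2c_1(\mathcal{F})^{n-1})$ on $A$. The bundle $\Theta$ is relatively ample because its fibrewise polarisation is the one induced by $\mathcal{F}$ via the Hodge index / Lefschetz pairing. Section 2 presumably defines $d$ as the rank of $p_{A*}$ of this polarising bundle, which equals $\chi(\Theta_s)$.

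Finally I pull back along $\sigma$. By definition $(\on{id}\times\sigma)^*\mathcal{P}\simeq\mathcal{L}$ up to a pullback from $S$, so the projection formula gives
\[
\sigma^*\theta=-\pi_*\bigl(c_1(\mathcal{L})^2c_1(\mathcal{F})^{n-1}\bigr).
\]
The pullback factor from $S$ drops out, since it contributes $c_1$ of a bundle from $S$ times $\pi_*(c_1(\mathcal{L})c_1(\mathcal{F})^{n-1})$, and that term is zero because $\mathcal{L}$ is fibrewise algebraically trivial. Combining this with the previous steps gives
\[
\on{DR}(\mathcal{L})=\sigma^*e=\frac1d\frac{(\sigma^*\theta)^g}{g!},
\]
which is the claimed formula. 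The hardest step is the weight argument in the second paragraph: checking that $\sigma$, coming from an algebraically trivial $\mathcal{L}$, sees only the correct Beauville component, and verifying the sign and normalisation of $\theta$ against $d$.
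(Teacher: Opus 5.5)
Your outline matches the paper's. You place $E=-p_{2*}(c_1(\mathcal P)^2\cdot p_1^*c_1(\mathcal F)^{n-1})$ on $\mathcal J=\operatorname{Pic}^0_{X/S}$, apply Poincar\'e's formula $E^g/g!=d\cdot e$, and pull back along $\sigma$. Your handling of the twist $\pi^*M$ in $(\mathrm{id}\times\sigma)^*\mathcal P$ is correct, and more careful than the paper's.

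The genuine gap is the relative ampleness of $E$. This is the one step with real content (it is the paper's main lemma), and you only assert it. Saying its fibrewise polarisation ``is the one induced by $\mathcal F$'' presupposes exactly what must be proved. Ampleness cannot be skipped. Your own weight argument gives $\ell^g/g!=\chi(\mathcal E_s)\cdot e$ for any symmetric rigidified bundle. It is precisely ampleness that guarantees $\chi(\mathcal E_s)=\operatorname{rk}\tilde\pi_*\mathcal E=d>0$, so that you may divide.

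A Hodge-theoretic proof is possible but takes real work:
\begin{itemize}
\item over $\mathbb C$, identify the class of $E_s$ in $H^2(\operatorname{Pic}^0(X_s),\mathbb Z)\cong\wedge^2H^1(X_s,\mathbb Z)^\vee$ with the form $(\alpha,\beta)\mapsto\pm\int_{X_s}\alpha\beta\,c_1(\mathcal F)^{n-1}$;
\item deduce definiteness of the associated Hermitian form, with the correct sign, from the Hodge--Riemann relations on $H^1$, where every class is primitive;
\item pass to an arbitrary field of characteristic $0$.
\end{itemize}
The paper instead argues algebraically. For a smooth $H\in|\mathcal F|$ it shows $E_X=\tilde\iota^*E_H$ for the restriction $\tilde\iota\colon\operatorname{Pic}^0_{X/S}\to\operatorname{Pic}^0_{H/S}$, using the projection formula and Fulton's Theorem 6.2. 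It notes that $\tilde\iota$ is finite by Lefschetz, and inducts down to curves, where $E=2\Theta$ fixes the sign.

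You also never check that your bundle is symmetric and rigidified, which your weight argument needs. Both facts follow from $[N]_X^*\mathcal P\simeq\mathcal P^{\otimes N}$ (so $[N]^*E=N^2E$) and from $e^*E=0$.

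Conversely, the step you call the main obstacle is not one, and your homogeneity detour can be dropped. Since $[N]^*e=N^{2g}e$ and $[N]^*\ell=N^2\ell$, both $e$ and $\ell^g$ lie in $\operatorname{CH}^g_{(0)}(A)$. The Fourier transform maps this isomorphically onto $\operatorname{CH}^0(\hat A)=\mathbb Q$ for connected $S$, sending a class $z$ to $\deg(z)[\hat A]$. Hence $\ell^g/g!=d\cdot e$ holds exactly in $\operatorname{CH}^g_{\mathbb Q}(A)$, with no ``modulo classes killed by sections'' caveat; the paper simply cites this as Poincar\'e's formula.

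As written, the detour is also inaccurate. $\sigma^*$ of a weight-$s$ piece does not vanish; it scales by $N^{2g-s}$ under $\sigma\mapsto[N]\circ\sigma$. The weight of the class of $\sigma$ plays no role.

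Finally, discard the determinant-of-cohomology definition of $\Theta$. By Grothendieck--Riemann--Roch its first Chern class picks up lower-order terms in $m$ and terms from $S$, so it is not in general proportional to $E$. Only your second definition matches the formula you prove.
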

\begin{remark}
    This formula is also true for numerically trivial line bundles, since they are algebraically trivial up to torsion (see Section \ref{sect2}). Theorem \ref{thm:dr1} also generalizes to the case where $S$ is a smooth connected algebraic stack over a field of characteristic $0$, and $\pi: X\longrightarrow S$ is a smooth, proper morphism of algebraic stacks that is representable by schemes (see Section \ref{sect3} for details).
\end{remark}
\begin{remark}
    If $X/S$ is a curve, we recover Hain's formula in the smooth case \cite{MR3184171}. If $X = C_1\times_SC_2\times_S...\times_S C_n$ is a product of curves, we know that $DR(\mathcal{L})$ is the intersection product of $DR(\mathcal{L}_i)$ where $\mathcal{L}_i$ is the pullback of $\mathcal{L}$ to $C_i$ via some section  (see \cite{holmes2019multiplicativity}), Theorem \ref{thm:dr1} in that case provides a formula that doesn't depend on the choice of sections (see Example \ref{exp:prod}).
\end{remark}
The starting point for the proof lies in observing that the jacobian of a curve $C/S$ is equipped with a canonical polarization (the theta divisor $\Theta$), and that this polarization is related to the universal line bundle $\mathcal U_C$ via the relation $p_{2*}(c_1(\mathcal{U}_C)^{.2}) = -2\Theta$ where $p_2: C\times_S \on{Pic}^0_{C/S}\longrightarrow \on{Pic}^0_{C/S}$ is the projection map. We can then use the Poincaré formula $\Theta^g = g! [e]$ to obtain the DR formula for (smooth) curves. In order to be able to apply the same principle in higher dimension, we first need to explicitly write down a polarization of $\Pic$. Fix a relatively ample line bundle $\mathcal{F}$ on $X/S$, and consider the divisor class $-p_{2*}(c_1(\mathcal{U}_X)^{.2}\cdot p_1^*c_1(\mathcal{F})^{.n-1})$, where $p_1, p_2$ are the projections of the product $X\times_S \on{Pic}^0_{X/S}$. We show by induction in Section \ref{sect2} that this gives a polarization of $\Pic$. The proof then proceeds as in the case of curves. 

\begin{acknowledgements}
    I would like to thank my advisor David Holmes for suggesting this problem and for his continued patience and guidance throughout the preparation of this article. I am also grateful to Chris Peters and Dimitri Zvonkine for answering some questions I had in relation to Example \ref{exp:prod}, and to Aitor Iribar Lopez for a nice discussion which inspired Remark \ref{rk:alb}, among other things. Finally, I am very grateful to Georgios Politopoulos for the many discussions we had on DR cycles in Leiden University.
\end{acknowledgements}

\section{Main result and proof}\label{sect2}

In this section, $S$ is a smooth quasi-projective connected scheme over a field of characteristic $0$.

 Let us first fix the following conventions: If $A/S$ is an abelian scheme, we write $\hat A/S$ for the dual, fitting into a diagram
 $$\begin{tikzcd}
& A\times_S \hat{A} \arrow{r}{p_2} \arrow{d}[swap]{p_1}
& \hat{A} \arrow{d}{\tilde{\pi}} \\
& A \arrow[swap]{r}{\pi}
& S
\end{tikzcd}$$
We write $\mathcal P_A$ for the Poincaré bundle on $A \times_S \hat A$, and 
\begin{equation}
    F_{A}: \on{CH}(A)\longrightarrow \on{CH}(\hat A); z \mapsto (p_2)_*(\exp(c_1(\mathcal P_A)) \cdot p_1^*z)
\end{equation}
for the Fourier transform (see \cite{Deninger1991}).

Let $\pi: X\longrightarrow S$ be a smooth projective scheme of relative dimension $n$, admitting a section. For any fibrewise algebraically trivial line bundle $\mathcal{L}$ on $X$, define the double ramification cycle of $\mathcal{L}$:
$$\operatorname{DR}(\mathcal{L}) \coloneqq \sigma^*(e) = \pi_*(\sigma \cdot e) \in \on{CH}^g(S)$$
where $\sigma, e$ are the sections of $\on{Pic}^0_{X/S}$ corresponding to $\mathcal{L}$ and $\mathcal{O}_X$ respectively, and $g = \dim \Pic$. Note that $\Pic$ is smooth in this case, see Proposition 9.5.20 and Remark 9.5.21 of \cite{MR2222646}.

\begin{lemma}\label{lem:DR_scaling}
    For any $r\in \mathbb{Z}_{\geq 0}$, $\on{DR}(\mathcal{L}^{\otimes r}) = r^{2g}\on{DR}(\mathcal{L}).$
\end{lemma}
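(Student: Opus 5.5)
We want $\mathrm{DR}(\mathcal{L}^{\otimes r}) = r^{2g}\,\mathrm{DR}(\mathcal{L})$. The plan is to realize the section of $\mathcal{L}^{\otimes r}$ as the composite $[r]\circ\sigma$, where $[r]:\Pic\to\Pic$ is multiplication by $r$ on the group scheme. Since $\Pic$ is smooth over $S$ (as noted above), and is an abelian scheme because $X/S$ is smooth projective with a section, $[r]$ is finite flat of degree $r^{2g}$ for $r\geq 1$.

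The key identity is $[r]^*[e] = [\ker[r]] = [\Pic[r]]$ as cycles in $\on{CH}^g(\Pic)$. This holds because $[r]$ is flat, so flat pullback of the zero section gives the fundamental class of the scheme-theoretic kernel. The kernel is finite étale over $S$ of degree $r^{2g}$, since we are in characteristic $0$.

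It then suffices to show $\sigma^*[\Pic[r]] = r^{2g}\sigma^*[e]$, which requires more than pulling back along $\sigma$ directly. I would use the Beauville decomposition (Deninger–Murre) of $\on{CH}_{\mathbb{Q}}(\Pic)$ into pieces $\on{CH}^g_{(s)}$ on which $[r]^*$ acts by $r^{2g-s}$. The class $[\Pic[r]]$ equals $[r]^*[e]$, and $[r]_*[r]^*[e] = r^{2g}[e]$. Alternatively, one can avoid the decomposition as follows:
\[
\mathrm{DR}(\mathcal{L}^r) = ([r]\sigma)^*[e] = \sigma^*[r]^*[e],
\]
and then compare $\sigma^*[r]^*[e]$ with $r^{2g}\sigma^*[e]$. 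This needs the section $\sigma$ to be pulled back compatibly. The cleanest route is to work universally: $\sigma$ is the pullback of the diagonal-type section of $\Pic\times_S\Pic\to\Pic$, so it suffices to treat the universal case $S'=\Pic$ with tautological section $\delta$. There $\delta^*[e]$ is the class $[e]$ itself, pulled back through the identification of $\delta$ with the identity. Likewise $([r]\delta)^*[e] = [r]^*[e]$. The claim therefore reduces to $[r]^*[e] = r^{2g}[e]$ in $\on{CH}^g_{\mathbb{Q}}(\Pic)$. This holds because $[e]$ lies in the Beauville component $\on{CH}^g_{(0)}$: $[e]$ is the Fourier-dual of $1$ up to sign, namely $F([\hat e]) = 1$ and $F(1) = (-1)^g[e]$, and $[r]^*$ acts on the relevant component by $r^{2g}$ via the compatibility $F\circ[r]_* = [r]^*\circ F$ with Deninger's Fourier transform.

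The main obstacle is this last Beauville-type identity in the relative setting, together with checking that pulling back along an arbitrary section $\sigma$ through the universal case is legitimate. The latter holds by functoriality of refined Gysin pullback, since $S$ is smooth. I would rely on Deninger–Murre for $[r]^*[e] = r^{2g}[e]$ rationally, which is why the identity is stated in $\on{CH}_{\mathbb{Q}}$. The case $r=0$ is trivial: both sides pull back $[e]$ along $e$, and the left side equals $e^*[e]=(-1)^g\,c_g(\text{Lie})$-type class. I would note that this holds only if $g>0$ makes $r^{2g}=0$ consistent; I would handle it separately by the Deninger–Murre vanishing $e^*[e]=0$ rationally, or by restricting the statement to $r\geq1$.
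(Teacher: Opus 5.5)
Your final route is correct and is essentially the paper's proof. You write $\DR(\mathcal{L}^{\otimes r}) = \sigma^*[r]^*e$ and deduce $[r]^*e = r^{2g}e$ in $\on{CH}^g_{\mathbb{Q}}(\mathcal{J})$ from $e = (-1)^g F_{\hat{\mathcal{J}}}([\hat{\mathcal{J}}])$ together with the compatibility of the Fourier transform with $[r]$ (Deninger--Murre, Cor.~2.22 and Prop.~2.16). The detours through $[\ker[r]]$, $[r]_*[r]^*$ and the universal section are unnecessary, since $\sigma^*$ is additive and can be applied directly to $[r]^*e = r^{2g}e$. For $r=0$, the vanishing $e^*e=0$ in $\on{CH}_{\mathbb{Q}}$ (for $g>0$) needs no separate citation: it follows from the $r\geq 2$ case applied to $\mathcal{L}=\mathcal{O}_X$.
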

\begin{proof}
By definition
$$\on{DR}(\mathcal{L}^{\otimes r}) = (r\sigma)^*e = \sigma^*[r]^*e,$$
where $[r]: \on{Pic}^0_{X/S}\longrightarrow \on{Pic}^0_{X/S}$ is the map of multiplication by $r$. For simplicity of notation, let us denote $\on{Pic}^0_{X/S}$ by $\mathcal{J}$, and let $\hat{\mathcal{J}}$ be its dual abelian scheme, with Fourier transform $F_{\hat{\mathcal{J}}}: \on{CH}(\hat{\mathcal{J}})\longrightarrow \on{CH}(\mathcal{J})$. 
By \cite[Corollary 2.22]{Deninger1991}, we know that 
$$e = (-1)^gF_{\hat{\mathcal{J}}}([\hat{\mathcal{J}}]).$$
Applying Proposition 2.16 of \cite{Deninger1991}, we obtain $$[r]^*e = r^{2g}e,$$
for any positive integer $r$, which finishes the proof.
\end{proof}
\begin{remark}
    This lemma allows us to extend the definition of $\on{DR}(\mathcal{L})$ (in the Chow ring of $S$ with $\mathbb{Q}$-coefficients) to the case where $\mathcal{L}$ is fibrewise numerically trivial, since numerically trivial line bundles are algebraically trivial up to torsion. 
\end{remark}
\begin{proposition}\label{prop:AS}
    If $A/S$ is an abelian scheme of dimension $g$, then 
    $$\on{DR}(\mathcal{L}) = (-1)^g\cdot\dfrac{\pi_*(c_1(\mathcal{L})^{.2g})}{(2g)!}.$$
\end{proposition}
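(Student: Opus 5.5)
We first make the setting explicit. Here $X=A$ is an abelian scheme, $\mathcal{L}$ is fibrewise algebraically trivial on $A$, and $\on{Pic}^0_{A/S}=\hat A$. The line bundle $\mathcal{L}$ corresponds to a section $\sigma: S\to \hat A$. After normalising $\mathcal{L}$ along the zero section (which does not change $\sigma$), we have $\mathcal{L}\simeq (1\times\sigma)^*\mathcal P_A$. The idea is to compute $\sigma^*e$ via the Fourier transform, and then to pull everything back along $\sigma$ to get a formula on $A$ itself.

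\textbf{Step 1: express $[e]$ as a Fourier transform.} As in the proof of Lemma \ref{lem:DR_scaling}, \cite[Corollary 2.22]{Deninger1991} (applied with the roles of $A$ and $\hat A$ as there) gives
$$[e_{\hat A}] = (-1)^g F_A([A]) = (-1)^g (p_2)_*\exp(c_1(\mathcal P_A)).$$
Only the degree-$2g$ term survives pushforward to codimension $g$, so
$$[e_{\hat A}]=(-1)^g (p_2)_*\bigl(c_1(\mathcal P_A)^{2g}\bigr)/(2g)!.$$

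\textbf{Step 2: pull back along $\sigma$.} Form the cartesian square with $1\times\sigma: A\to A\times_S\hat A$ over $\sigma: S\to\hat A$. The map $p_2$ is smooth and proper, so base change holds in Chow groups (the pullback $\sigma^*$ is the refined Gysin map of the regular embedding $\sigma$, since $\hat A$ is smooth over $S$ and $S$ is smooth). This gives
$$\sigma^*(p_2)_* = \pi_*(1\times\sigma)^*,$$
and hence
$$\sigma^*[e]=(-1)^g\pi_*\bigl(c_1((1\times\sigma)^*\mathcal P_A)^{2g}\bigr)/(2g)! = (-1)^g\pi_*\bigl(c_1(\mathcal{L})^{2g}\bigr)/(2g)!.$$
Finally, $\on{DR}(\mathcal{L})=\sigma^*e$ by definition, which is the claimed formula.

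\textbf{Main obstacle.} The delicate step is Step 1: getting the sign and the direction of the Fourier transform right, i.e. checking that \cite[Corollary 2.22]{Deninger1991} indeed yields $e_{\hat A}=(-1)^gF_A([A])$ with the convention chosen for $F$. I would verify this by a sanity check on a single abelian variety over a point. There, $F_A([A])$ is concentrated in codimension $g$, and $\deg\bigl(c_1(\mathcal P)^{2g}\bigr)/(2g)!=(-1)^g$ on $A\times\hat A$. This computation can be done via the Poincaré formula, or via $\chi(\mathcal P)=(-1)^g$ together with Riemann–Roch, where the Todd class is trivial. A secondary point is the normalisation of $\mathcal{L}$: the formula for $\on{DR}$ must be insensitive to twisting $\mathcal{L}$ by $\pi^*M$. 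I would check this by expanding
$$c_1(\mathcal{L}\otimes\pi^*M)^{2g}=\sum_k\binom{2g}{k}c_1(\mathcal{L})^{2g-k}\pi^*c_1(M)^k$$
and applying the projection formula. The terms with $k\ge1$ involve $\pi_*\bigl(c_1(\mathcal{L})^{2g-k}\bigr)$, which has negative codimension for $k\geq 2$. For $k=1$ the term $\pi_*\bigl(c_1(\mathcal{L})^{2g-1}\bigr)$ lies in codimension $g-1$; it vanishes because $c_1(\mathcal{L})$ is in the $[-1]^*$-eigenspace with eigenvalue $-1$ (as $\mathcal{L}$ is algebraically trivial), so odd powers push forward to zero.
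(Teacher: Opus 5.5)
Your Steps 1 and 2 are the paper's proof. You write $e_{\hat A}=(-1)^gF_A([A])$ via \cite[Corollary 2.22]{Deninger1991}, keep the codimension-$g$ piece $(-1)^g p_{2*}(c_1(\mathcal{P}_A)^{2g})/(2g)!$, and pull back along $\sigma$ by base change for $p_2$. The paper stops there and silently identifies $(1\times\sigma)^*\mathcal{P}_A$ with $\mathcal{L}$. You are right that this holds only up to a twist by $\pi^*M$. So for a non-normalised $\mathcal{L}$ one must still show $\pi_*(c_1(\mathcal{L}_0\otimes\pi^*M)^{2g})=\pi_*(c_1(\mathcal{L}_0)^{2g})$, where $\mathcal{L}_0$ is the rigidified bundle.

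Your justification of that invariance is wrong as written. The class $\pi_*(c_1(\mathcal{L}_0)^{2g-k})$ lies in $\operatorname{CH}^{g-k}(S)$, which has negative codimension only for $k>g$, not for all $k\geq 2$. For odd $k$ your $[-1]^*$ parity argument does kill the term. For even $k$ with $2\leq k\leq g$ neither argument applies; for example $g=3$, $k=2$ leaves $\pi_*(c_1(\mathcal{L}_0)^4)\in\operatorname{CH}^1(S)$.

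The fix is to use all multiplication maps, not just $[-1]$. Since $\mathcal{L}_0$ is rigidified and fibrewise in $\operatorname{Pic}^0$, $[n]^*\mathcal{L}_0\simeq\mathcal{L}_0^{\otimes n}$, so $[n]^*c_1(\mathcal{L}_0)^m=n^m c_1(\mathcal{L}_0)^m$. Combine this with $\pi\circ[n]=\pi$ and $[n]_*[n]^*=n^{2g}$, which holds because $[n]$ is finite flat of degree $n^{2g}$. This gives $n^{m}\pi_*(c_1(\mathcal{L}_0)^m)=\pi_*([n]_*[n]^*c_1(\mathcal{L}_0)^m)=n^{2g}\pi_*(c_1(\mathcal{L}_0)^m)$. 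Hence $\pi_*(c_1(\mathcal{L}_0)^m)=0$ in $\operatorname{CH}_{\mathbb{Q}}(S)$ for every $m\neq 2g$. This kills all the $k\geq 1$ terms at once and completes your argument.
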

\begin{proof}
    We again use the fact that $e_{\hat{A}} = (-1)^gF_{A}([A]),$ (Corollary 2.22 of \cite{Deninger1991}). Expanding this we get:
    \begin{align*}
        e_{\hat{A}} &= (-1)^g.p_{2*}(\exp{(c_1(\mathcal{P}_A))}.p_1^*([A]))\\
        &= (-1)^g.p_{2*}\left(\sum_{i\geq0} \dfrac{c_1(\mathcal{P}_A)^{.i}}{i!}\right)\\
        &= (-1)^g.p_{2*}\left(\dfrac{c_1(\mathcal{P}_A)^{.2g}}{(2g)!}\right),
    \end{align*}
    pulling everything back via $\sigma$ gives the desired formula.
\end{proof}

Let us go back to the setting where $X/S$ is a smooth projective scheme. Choose a relatively ample line bundle $\mathcal{F}$ on $X$, and consider the following diagram:
$$\begin{tikzcd}
& X\times_S \on{Pic}^0_{X/S} \arrow{r}{p_2} \arrow{d}[swap]{p_1}
& \on{Pic}^0_{X/S} \arrow{d}{\Tilde{\pi}} \\
& X \arrow[swap]{r}{\pi}
& S
\end{tikzcd}$$
Define $$E \coloneqq -{p_2}_*(c_1(\mathcal{U})^{.2}.p_1^*(c_1(\mathcal{F}))^{.(n-1)})\in \on{CH}^1(Pic^0_{X/S}),$$ where $\mathcal{U}$ is the universal line bundle of $X$ rigidified along some section $a \colon S\longrightarrow X$.

\begin{lemma}
    $e^*E = 0$ in $\on{CH}^1(S)$. 
\end{lemma}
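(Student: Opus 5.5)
The plan is to compute $e^*E$ by base change along the cartesian square obtained by pulling the product $X\times_S \Pic$ back along the zero section $e\colon S\to \Pic$. Since $p_2$ is smooth and proper (in particular flat), we may use flat base change for proper pushforward. Concretely, $e^*{p_2}_* = \pi_*(\mathrm{id}_X\times e)^*$, where $\mathrm{id}_X\times e\colon X \cong X\times_S S \to X\times_S\Pic$ is the base change of $e$. Pulling back is compatible with intersection products and Chern classes, and $p_1\circ(\mathrm{id}_X\times e)=\mathrm{id}_X$. This gives
\begin{equation*}
e^*E = -\pi_*\bigl(c_1((\mathrm{id}_X\times e)^*\mathcal{U})^{.2}\cdot c_1(\mathcal{F})^{.(n-1)}\bigr).
\end{equation*}

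The key step is to identify $(\mathrm{id}_X\times e)^*\mathcal{U}$. By the universal property of the rigidified universal bundle, the pullback of $\mathcal{U}$ along $\mathrm{id}_X\times\tau$, for a point $\tau\colon T\to \Pic$, is the line bundle on $X_T$ classified by $\tau$, normalised to be trivial along the section $a_T$. For $\tau=e$ the classified bundle is $\mathcal{O}_X$, which is already rigidified along $a$. Hence $(\mathrm{id}_X\times e)^*\mathcal{U}\simeq\mathcal{O}_X$. A priori one only knows that it equals $\mathcal{O}_X$ up to the pullback of a line bundle $\mathcal{M}$ on $S$. The rigidification $a^*\mathcal{U}\simeq\mathcal{O}$ (the restriction of $\mathcal{U}$ to $S\times_S\Pic$ via $a$ is trivial) forces $\mathcal{M}=a^*\pi^*\mathcal{M}\simeq\mathcal{O}_S$. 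Therefore $c_1((\mathrm{id}_X\times e)^*\mathcal{U})=0$, the integrand vanishes, and $e^*E=0$.

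The only points needing care are technical:
\begin{itemize}
\item the validity of the base change formula for the Chow pushforward along the smooth proper morphism $p_2$, using the refined Gysin map $e^!$ for the regular embedding $e$, which agrees with $e^*$ because $\Pic$ is smooth over the smooth $S$;
\item the use of the rigidification to exclude a possible twist by a bundle from $S$.
\end{itemize}
I expect the rigidification point to be the main (though mild) obstacle. It is handled exactly as described above: restrict along the section $a$ and use that $\pi\circ a=\mathrm{id}_S$.
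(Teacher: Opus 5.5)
Your proposal is correct and is essentially the paper's proof. You base change along the zero section to get $e^*{p_2}_* = \pi_*\tilde e^*$ with $\tilde e = \mathrm{id}_X\times e$, and $\tilde e^*\mathcal{U}$ is trivial by the universal property of $\mathcal{U}$, so the integrand vanishes. As a small aside, the rigidification step you flag as the main obstacle is not needed: if $\tilde e^*\mathcal{U}\simeq\pi^*\mathcal{M}$, the projection formula gives $\pi_*\bigl(\pi^*c_1(\mathcal{M})^{.2}\cdot c_1(\mathcal{F})^{.(n-1)}\bigr) = c_1(\mathcal{M})^{.2}\cdot\pi_*\bigl(c_1(\mathcal{F})^{.(n-1)}\bigr)$, which is zero because $\pi_*\bigl(c_1(\mathcal{F})^{.(n-1)}\bigr)\in\on{CH}^{-1}(S)=0$.
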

\begin{proof}
We have that
    \begin{align*}
        e^*E &= -e^*p_{2_*}(c_1(\mathcal{U})^{.2}.p_1^*c_1(\mathcal{F})^{.(n-1)})\\
        &= -\pi_*\Tilde{e}^*(c_1(\mathcal{U})^{.2}.p_1^*c_1(\mathcal{F})^{.(n-1)}),
    \end{align*}
    where $\Tilde{e}$ is the induced map in the following diagram:
    $$\begin{tikzcd}
& X \arrow{r}{\Tilde{e}} \arrow{d}[swap]{\pi}
& X\times_S \mathcal{J} \arrow{d}{p_2} \\
& S \arrow[swap]{r}{e}
& \mathcal{J}
\end{tikzcd}$$
and so
    \begin{align*}
       e^*E &= -\pi_*((\Tilde{e}^*c_1(\mathcal{U}))^{.2}.\Tilde{e}^*p_1^*c_1(\mathcal{F})^{.(n-1)})\\
        &= 0 \text{ (by the universal property of } \mathcal{U}). \qedhere
    \end{align*}
\end{proof}

Let us denote $\Pic$ by $\mathcal{J}$, and choose a Cartier divisor $\mathbb{E}$ on $\mathcal{J}$ representing the class $E$. Since $e^*E$ vanishes in $\on{CH}^1(S)$, we know that $e^*\mathbb{E}$ is principal, let us choose a rational function $f$ such that $e^*\mathbb{E} = \on{div}f$. Define $\mathcal{E}\coloneqq \mathcal{O}(\mathbb{E})$, with rigidification given by $f^{-1}:e^*\mathcal{E}\longrightarrow \mathcal{O}_S$. 

\begin{theorem}\label{thm:DR}
    For any numerically trivial line bundle $\mathcal{L}$ on $X$:
    $$\on{DR}(\mathcal{L}) = \dfrac{1}{d}\cdot\dfrac{\left(-\pi_*(c_1(\mathcal{L})^{.2}.c_1(\mathcal{F})^{.(n-1)}\right)^{.g}}{g!},$$
    where $d := \on{rk} \tilde{\pi}_*\mathcal{E}\in \mathbb{Z}_{> 0}$, and $\tilde{\pi}$ denotes the structure map $\mathcal{J}\longrightarrow S$.
\end{theorem}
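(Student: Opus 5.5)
The plan is to reduce the statement to an identity in $\on{CH}_{\mathbb{Q}}(\mathcal{J})$ between the class of the zero section and a power of $E$, and then pull back along $\sigma$. By Lemma \ref{lem:DR_scaling} and the remark after it, we may replace $\mathcal{L}$ by a power and assume $\mathcal{L}$ is algebraically trivial. Both sides of the formula scale by $r^{2g}$ under $\mathcal{L}\mapsto\mathcal{L}^{\otimes r}$, so this is harmless. We may also rigidify $\mathcal{L}$ along the section $a$, i.e.\ replace it by $\mathcal{L}\otimes\pi^*a^*\mathcal{L}^{-1}$. This does not change $\sigma$, hence does not change $\on{DR}(\mathcal{L})$. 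On the right-hand side the change is $-2\,\pi_*(c_1(\mathcal{L})\cdot c_1(\mathcal{F})^{n-1})\cdot c_1(a^*\mathcal{L}) + (\text{degree of } \mathcal{F}\text{-terms})\cdot c_1(a^*\mathcal{L})^2$, and I expect both terms to vanish. The cross term should be zero because $\pi_*(c_1(\mathcal{L})\cdot c_1(\mathcal{F})^{n-1})$ has degree $0$ on fibres, so it vanishes in $\on{CH}^0(S)$. The last term is harder: its coefficient is the fibre degree of $\mathcal{F}^{n-1}$ times $c_1(a^*\mathcal{L})^2$, which is not obviously zero. I will have to check this term carefully; if it does not vanish, rigidification must instead be absorbed into the definition of $\mathcal{U}$. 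Once $\mathcal{L}$ is rigidified, $(\mathrm{id}\times\sigma)^*\mathcal{U}\simeq\mathcal{L}$. Base change along $\sigma$ then gives $\sigma^*E = -\pi_*(c_1(\mathcal{L})^{2}\cdot c_1(\mathcal{F})^{n-1})$. It therefore suffices to prove
$$\sigma^*[e] = \frac{1}{d}\,\sigma^*\frac{c_1(\mathcal{E})^{g}}{g!}.$$

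The first substantial step is to show that $\mathcal{E}$ is a relatively ample, symmetric, rigidified line bundle on $\mathcal{J}/S$, i.e.\ a polarization. Symmetry should follow because $[-1]^*\mathcal{U}\simeq\mathcal{U}^{-1}$ on $X\times_S\mathcal{J}$, so $[-1]^*E=E$. Ampleness can be checked on geometric fibres, so it reduces to $S=\on{Spec}k$, $k$ algebraically closed. There I would argue by induction on $n$, as announced in the introduction, using a smooth hyperplane section $Y\in|\mathcal{F}^{m}|$. For $n\geq 3$, Lefschetz gives that restriction $\on{Pic}^0_X\to\on{Pic}^0_Y$ has finite kernel, and $E_X$ is, up to a positive multiple, the pullback of $E_Y$ built with $\mathcal{F}|_Y$. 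The chain of hyperplane sections ends at $n=1$, where $E$ is $2\Theta$ by the curve formula $p_{2*}(c_1(\mathcal{U}_C)^{2})=-2\Theta$; the surface step $n=2$ is handled by passing to a curve $C\in|\mathcal{F}^m|$. A line bundle pulled back along a finite morphism from an ample one is ample, so $\mathcal{E}$ is ample. I expect this step, and in particular the surface case, to be the main obstacle: one needs the restriction $\on{Pic}^0_X\to\on{Pic}^0_C$ to be finite, which follows from the Hodge index theorem or Lefschetz for $H^1$.

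With $\mathcal{E}$ a polarization, Riemann–Roch on abelian varieties gives $\chi(\mathcal{E}_s)=E_s^{g}/g!$. Since $\mathcal{E}$ is ample, its higher cohomology vanishes, so $\tilde{\pi}_*\mathcal{E}$ is locally free of rank $d=\chi(\mathcal{E}_s)>0$, and $\deg\varphi_{\mathcal{E}}=d^2$. To compare $[e]$ with $c_1(\mathcal{E})^g$, I would use Beauville's decomposition $\on{CH}_{\mathbb{Q}}(\mathcal{J})=\bigoplus\on{CH}_{(s)}$, where $[r]^*$ acts on $\on{CH}^g_{(s)}$ by $r^{2g-s}$. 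For any class $z$ in $\on{CH}^g_{(s)}$,
$$\sigma^*[r]^*z=r^{2g-s}\,\sigma^*z.$$
Lemma \ref{lem:DR_scaling} says $\sigma^*[r]^*[e]=r^{2g}\sigma^*[e]$ for all $r$, and comparing these polynomial identities in $r$ shows $\sigma^*$ only sees the $(0)$-components. Because $\mathcal{E}$ is symmetric, $c_1(\mathcal{E})\in\on{CH}^1_{(0)}$ and so $c_1(\mathcal{E})^g/g!$ is pure of weight $(0)$.

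It then remains to show $[e]_{(0)}=\frac{1}{d}\,c_1(\mathcal{E})^g/g!$. I would get this by Fourier transform. For a symmetric rigidified ample $\mathcal{E}$, Deninger/Mukai give $F_{\mathcal{J}}(\exp c_1(\mathcal{E}))=d\cdot\exp(-c_1(\hat{\mathcal{E}}))$, with $\varphi_{\mathcal{E}}^*c_1(\hat{\mathcal{E}})=d\,c_1(\mathcal{E})$ in the $(0)$-part. Combining this with $e=(-1)^gF_{\hat{\mathcal{J}}}([\hat{\mathcal{J}}])$ from \cite{Deninger1991}, the compatibility of $F$ with the isogeny $\varphi_{\mathcal{E}}$, and the codimension-$g$ component, gives the identity. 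As a consistency check, this argument should reproduce Proposition \ref{prop:AS} when $X=A$ is an abelian scheme and $\mathcal{F}$ is a principal polarization, and Hain's formula when $n=1$, where $d=2^g$.
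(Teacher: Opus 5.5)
Your argument is correct and has the same overall structure as the paper's proof:
\begin{itemize}
\item relative ampleness of $E$ by Lefschetz induction down to $E=2\Theta$ on curves;
\item symmetry from $[-1]^*\mathcal{U}\simeq\mathcal{U}^{-1}$;
\item the identity $E^{g}/g!=d\cdot e$ in $\on{CH}^g_{\mathbb{Q}}(\mathcal{J})$;
\item pullback along $\sigma$.
\end{itemize}

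The genuine difference is how the third step is obtained. The paper takes it from the classical Poincar\'e formula (Lemma \ref{lem:PF}). That formula is a cohomological statement about a single abelian variety, and the paper only asserts that the relative Chow-theoretic version ``follows the same lines''. Your Deninger--Murre argument actually proves the identity in Chow groups over $S$, so your route is the more complete one at the key step.

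It can be shortened:
\begin{itemize}
\item The proof of Lemma \ref{lem:DR_scaling} gives $[r]^*e=r^{2g}e$ on $\mathcal{J}$ itself, so $e\in\on{CH}^g_{(0)}(\mathcal{J})$ outright. The detour through $\sigma^*$ is unnecessary, and the claim ``$\sigma^*$ only sees the $(0)$-components'' holds for $e$ but not for arbitrary classes.
\item $F_{\mathcal{J}}$ is injective and sends $\on{CH}^g_{(0)}(\mathcal{J})$ into $\on{CH}^0(\hat{\mathcal{J}})_{\mathbb{Q}}=\mathbb{Q}$; only the term $p_{2*}p_1^*z$ survives, which is the fibre degree of $z$. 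Hence a class in $\on{CH}^g_{(0)}(\mathcal{J})$ is determined by its fibre degree, which is $1$ for $e$ and $d$ for $E^g/g!$.
\end{itemize}
This avoids $\hat{\mathcal{E}}$ entirely. If you keep $\hat{\mathcal{E}}$, the normalisation compatible with $F_{\mathcal{J}}(\exp c_1(\mathcal{E}))=d\exp(-c_1(\hat{\mathcal{E}}))$ is $\varphi_{\mathcal{E}}^*c_1(\hat{\mathcal{E}})=c_1(\mathcal{E})$, not $d\,c_1(\mathcal{E})$. A degree count in codimension $g$ confirms this: both sides must have fibre degree $(-1)^g$.

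Two smaller points also improve on the paper.

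First, checking ampleness on geometric fibres is cleaner than the paper's use of a hypersurface $H\subset X$ smooth over all of $S$. Such an $H$ is in general only available locally on $S$.

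Second, the paper's final computation silently uses $(\mathrm{id}\times\sigma)^*\mathcal{U}\simeq\mathcal{L}$. This only holds up to twisting by $\pi^*a^*\mathcal{L}^{-1}$, and your rigidification step addresses it. The term you were unsure about vanishes for dimension reasons. Since $c_1(\mathcal{F})^{n-1}$ has codimension $n-1$ on $X$, we get $\pi_*(c_1(\mathcal{F})^{n-1})\in\on{CH}^{-1}(S)=0$; there is no ``fibre degree of $\mathcal{F}^{n-1}$'' to worry about. Together with your cross-term argument, this shows the right-hand side is unchanged under $\mathcal{L}\mapsto\mathcal{L}\otimes\pi^*M$, so no modification of $\mathcal{U}$ is needed.

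The same computation on $\mathcal{J}$ shows that $E$ does not depend on the rigidification of $\mathcal{U}$. This uses $p_{2*}p_1^*c_1(\mathcal{F})^{n-1}=0$ and the vanishing of the fibre degree of $c_1(\mathcal{U})\cdot p_1^*c_1(\mathcal{F})^{n-1}$. It is what justifies replacing $\mathcal{U}_X|_{Y\times\mathcal{J}_X}$ by the pullback of $\mathcal{U}_Y$ in your induction step.
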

\noindent To prove this, we will need the following lemma:
\begin{lemma}\label{lem:AM}
$E$ is a relatively ample divisor on $\mathcal{J}$.
\end{lemma}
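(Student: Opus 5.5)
Relative ampleness on the abelian scheme $\mathcal{J}/S$ can be tested fibrewise, so I would reduce at once to the case $S=\operatorname{Spec} k$ with $k$ algebraically closed of characteristic $0$. There $X$ is a smooth projective variety of dimension $n$, $\mathcal{J}=\on{Pic}^0_X$ is an abelian variety, and $\mathcal{F}$ is ample. Since $\mathcal{E}$ is rigidified along $e$ and $E$ is quadratic in $\mathcal{U}$, the class $E$ should be symmetric. For a symmetric line bundle on an abelian variety, ampleness is equivalent to the positivity of the associated Hermitian form, equivalently to the polarization $\phi_E\colon \mathcal{J}\to\hat{\mathcal{J}}$ being an isogeny that comes from an ample class. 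The concrete aim is therefore to identify $E$, or its form, with a positive definite Hermitian form on $H^1(X,\mathcal{O}_X)$ (or on the tangent space), and to use the Lefschetz principle to work over $\mathbb{C}$.

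\textbf{Step 1 (curves).} For $n=1$ the definition gives $E=-p_{2*}(c_1(\mathcal{U})^{.2})$. This equals $2\Theta$ up to algebraic equivalence, by the relation recalled in the introduction, and $\Theta$ is ample.

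\textbf{Step 2 (induction on $n$, the route announced in the introduction).} Replace $\mathcal{F}$ by $\mathcal{F}^{\otimes m}$ for $m\gg0$. This scales $E$ by $m^{n-1}$ and so does not affect ampleness. By Bertini, choose a smooth hyperplane section $Y\in|\mathcal{F}|$. Then $c_1(\mathcal{F})\cap[X]=[Y]$, so with $\iota\colon Y\hookrightarrow X$ the projection formula gives
$$E_X=-p_{2*}\big((\iota\times 1)^*c_1(\mathcal{U}_X)^{.2}\cdot c_1(\mathcal{F}_{|Y})^{.(n-2)}\big)=r^*E_Y,$$
where $r\colon \on{Pic}^0_X\to\on{Pic}^0_Y$ is restriction and $(\iota\times1)^*\mathcal{U}_X=(1\times r)^*\mathcal{U}_Y$ up to rigidification. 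By induction $E_Y$ is ample on $\on{Pic}^0_Y$. The pullback of an ample bundle under a morphism with finite kernel is ample, so it suffices that $r$ has finite kernel. For $n\ge 3$, Lefschetz (Grothendieck--Lefschetz) gives that $H^1(X,\mathcal{O}_X)\to H^1(Y,\mathcal{O}_Y)$ is an isomorphism and that $r$ is injective on $\on{Pic}^0$. So the induction goes through for $n\ge3$, with base case $n=2$.

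\textbf{Step 3 (the surface case, the main obstacle).} For $n=2$ the section $Y$ is a curve, and $H^1(X,\mathcal{O}_X)\to H^1(Y,\mathcal{O}_Y)$ is still injective by weak Lefschetz, since $H^1(X,\mathcal{O}(-Y))=0$ by Kodaira vanishing. Hence $r$ has finite kernel (its connected kernel is trivial because the tangent map is injective). The base case is then Step 1 applied to $Y$: $E_X=r^*E_Y=r^*(2\Theta_Y)$ is ample. The delicate points are the following.
\begin{enumerate}
\item The equality $E_X=r^*E_Y$ must hold as classes, not only numerically. Numerical equality suffices for ampleness, so I would work modulo numerical equivalence, where rigidification issues disappear.
\item Kernels of $r$ on the component group are finite anyway. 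Only the identity component matters, since $\mathcal{J}=\on{Pic}^0$ is connected and smooth in characteristic $0$.
\end{enumerate}
Finally, this fibrewise argument gives relative ampleness of $\mathcal{E}$ over $S$ by the fibrewise criterion for proper flat morphisms (EGA III, 4.7.1).
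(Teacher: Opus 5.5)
Your proposal is correct and follows the paper's own route. The paper also inducts on $n$, with base case $E=2\Theta$ for curves; it takes a smooth hyperplane section $Y\in|\mathcal{F}|$, uses the projection formula and Fulton 6.2 to get $E_X=r^*E_Y$, and concludes because the pullback of an ample class under the finite restriction map $r\colon \on{Pic}^0_X\to\on{Pic}^0_Y$ is ample. Two of your steps add detail the paper leaves implicit: the fibrewise reduction to $S=\operatorname{Spec}k$, and the Kodaira-vanishing proof that $H^1(X,\mathcal{O}_X)\to H^1(Y,\mathcal{O}_Y)$ is injective (which covers $n=2$ as well as $n\ge 3$) in place of the paper's citation of Kleiman's Lefschetz lemma. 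The fibrewise reduction also means you never need a hypersurface smooth over all of $S$ that carries a section, which the paper assumes without comment.
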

\begin{proof}
    We will do this by induction on the dimension $n$ of $X$. If $n = 1$, $E = 2\Theta$ where $\Theta$ is the canonical polarization of the Jacobian (see \cite{AST_1985__127__29_0}), which is ample.
    
Suppose the hypothesis is true for schemes of dimension $n-1$. Let $H\subset X$ be a smooth hypersurface such that $[H] = c_1(\mathcal{F})\in \on{CH}^1(X)$ (this exists since we can assume $\mathcal{F}$ to be very ample). Let $\iota: H\xhookrightarrow{} X$ be the inclusion map, and $\Tilde{\iota}: \mathcal{J}_X\longrightarrow \mathcal{J}_H$ be the pullback map by $\iota$. Consider the following commutative diagram:
\[
\begin{tikzcd}[row sep=2em, column sep=2.5em]
  & X\times_S \mathcal{J}_X \arrow[rr, "p_{2,X}"] & & \mathcal{J}_X \arrow[dd, "\Tilde{\iota}"] \\
H\times_S \mathcal{J}_X \arrow[ur, "\iota"] \arrow[dr, "\Tilde{\iota}"'] & & & \\
  & H\times_S \mathcal{J}_H \arrow[rr, "p_{2,H}"'] & & \mathcal{J}_H
\end{tikzcd}
\]
Let $\mathcal{U}_X$ and $\mathcal{U}_H$ be the universal line bundles of $X$ and $H$ respectively. Notice (by definition of $\iota$ and $\Tilde{\iota}$) that
$$\Tilde{\iota}^*(c_1(\mathcal{U}_H)^{.2}.p_1^*c_1(\mathcal{F}_{|H})^{.(n-2)}) = \iota^*c_1(\mathcal{U}_X)^{.2}.p_1^*c_1(\mathcal{F})^{.(n-2)}$$
and hence
$$\iota_*\Tilde{\iota}^*(c_1(\mathcal{U}_H)^{.2}.p_1^*c_1(\mathcal{F}_{|H})^{.(n-2)}) = c_1(\mathcal{U}_X)^{.2}.p_1^*c_1(\mathcal{F})^{.(n-2)}.p_1^*[H] = c_1(\mathcal{U}_X)^{.2}.p_1^*c_1(\mathcal{F})^{.(n-1)},$$
thus, by applying Theorem 6.2 of \cite{MR1644323}, we have:
$$p_{2,X_*}(c_1(\mathcal{U}_X)^{.2}.p_1^*c_1(\mathcal{F})^{.(n-1)}) = \Tilde{\iota}^*p_{2,H_*}(c_1(\mathcal{U}_H)^{.2}.p_1^*c_1(\mathcal{F}_{|H})^{.(n-2)}).$$
Since $p_{2,H_*}(c_1(\mathcal{U}_H)^{.2}.p_1^*c_1(\mathcal{F}_{|H})^{.(n-2)})$ is relatively ample by assumption, and $\Tilde{\iota}$ is finite by Lefschetz' Hyperplane Theorem (see Lemma 2A7 of \cite{kleiman1968algebraic}), we conclude that $E$ is relatively ample.
\end{proof}
\begin{proof}[Proof of Theorem \ref{thm:DR}]
    We will first use the following lemma to write down a formula for $e_{\mathcal{J}}$ in terms of the universal line bundle of $X$: 
    \begin{lemma}\label{lem:PF}
        Let $\mathcal{E}$ be a relatively ample and symmetric line bundle on an abelian scheme $\pi: A\longrightarrow S$, that is rigidified along the zero section $e_A$ of $A/S$, then
    $$\dfrac{c_1(\mathcal{E})^{.g}}{g!} = d.e_A \in \on{CH}^g(A),$$
    where $d = \on{rk} \pi_*\mathcal{E}\in \mathbb{Z}_{>0}$ and $e_A$ (by abuse of notation) refers to the class $e_{A*}([S])$ in $\on{CH}^g(A)$. Moreover
    \begin{equation*}
        c_1(\mathcal{E})^{.g+k}=0
    \end{equation*}
    for any $k >0$. 
    \end{lemma}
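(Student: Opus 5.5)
The plan is to compute $c_1(\mathcal{E})^{.g}$ through Beauville's decomposition of $\on{CH}_{\mathbb{Q}}(A)$ and the Fourier transform of \cite{Deninger1991}. Since $\mathcal{E}$ is symmetric and rigidified along $e_A$, its class $c_1(\mathcal{E})$ lies in the eigenspace $\on{CH}^1_{(0)}(A)$, where $[r]^*$ acts by $r^2$. Hence $c_1(\mathcal{E})^{.m}$ lies in $\on{CH}^m_{(0)}(A)$, on which $[r]^*$ acts by $r^{2m}$.

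First I will establish the vanishing statement. By Deninger's results, $F_A$ maps $\on{CH}^p_{(s)}(A)$ isomorphically onto $\on{CH}^{g-p+s}_{(s)}(\hat A)$. So $\on{CH}^{g+k}_{(0)}(A)\cong\on{CH}^{-k}_{(0)}(\hat A)=0$ for $k>0$. It follows that $c_1(\mathcal{E})^{.g+k}=0$.

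Next I treat the top power. The same isomorphism identifies $\on{CH}^g_{(0)}(A)$ with $\on{CH}^0_{(0)}(\hat A)=\mathbb{Q}\cdot[\hat A]$, since $S$ is connected. By Corollary 2.22 of \cite{Deninger1991}, $e_A=(-1)^gF_{\hat A}([\hat A])$ lies in $\on{CH}^g_{(0)}(A)$. Therefore $\on{CH}^g_{(0)}(A)=\mathbb{Q}\cdot e_A$, and so $c_1(\mathcal{E})^{.g}/g!=\lambda\, e_A$ for some $\lambda\in\mathbb{Q}$.

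To compute $\lambda$, I restrict to a geometric fibre $A_{\bar s}$ by pulling back along $\bar s\to S$, using compatibility of Chow groups with base change, and then take degrees. On an abelian variety, Riemann–Roch together with Mumford's vanishing theorem for ample bundles gives
\[\deg\left(c_1(\mathcal{E}_{\bar s})^{g}\right)/g!=\chi(\mathcal{E}_{\bar s})=h^0(\mathcal{E}_{\bar s}).\]
Since $h^0$ is constant and the higher cohomology vanishes, cohomology and base change makes $\pi_*\mathcal{E}$ locally free of rank $d=h^0(\mathcal{E}_{\bar s})>0$. Comparing with $\deg(e_A|_{A_{\bar s}})=1$ then gives $\lambda=d$.

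The main obstacle is justifying that $c_1(\mathcal{E})$ lies purely in $\on{CH}^1_{(0)}$. Symmetry gives $[-1]^*\mathcal{E}\cong\mathcal{E}\otimes\pi^*M$ for some line bundle $M$ on $S$, and the rigidification along $e_A$ forces $M$ to be trivial. Then the theorem of the cube gives $[r]^*\mathcal{E}\cong\mathcal{E}^{r^2}$, again up to a pullback from $S$ that is killed by the rigidification. This yields $[r]^*c_1(\mathcal{E})=r^2c_1(\mathcal{E})$ with no error terms from $\on{CH}^1(S)$, and that is exactly what the eigenspace argument needs.
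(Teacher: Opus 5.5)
Your proof is correct, and it is more complete than what the paper gives. The paper's proof is only a pointer: it cites the Poincaré formula for abelian varieties (Birkenhake--Lange, 16.5.6) and asserts that the abelian-scheme case "follows the same lines".

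You instead prove the relative statement directly with the Deninger--Murre Fourier transform, which the paper already uses in Lemma \ref{lem:DR_scaling} and Proposition \ref{prop:AS}. Your argument has three steps:
\begin{itemize}
\item Rigidification plus the theorem of the cube puts $c_1(\mathcal{E})$ in the pure-weight piece $\on{CH}^1_{(0)}(A)$, with no error term from $\pi^*\on{CH}^1(S)$. This is exactly where the rigidification hypothesis is needed; without it, $c_1(\mathcal{E})^{g}$ picks up cross terms involving $\pi^*e_A^*c_1(\mathcal{E})$.
\item The isomorphism $F_A\colon \on{CH}^p_{(0)}(A)\xrightarrow{\sim}\on{CH}^{g-p}_{(0)}(\hat A)$ gives at once the vanishing for $p>g$ and, using connectedness of $S$, that $\on{CH}^g_{(0)}(A)=\mathbb{Q}\cdot e_A$.
\item The constant is fixed fibrewise by Riemann--Roch, Mumford's vanishing theorem, and cohomology and base change. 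The last of these also shows that $\pi_*\mathcal{E}$ is locally free of constant rank $d>0$.
\end{itemize}

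The paper's citation buys brevity. Your route gives an actual proof over a base, treats an arbitrary degree $d$ uniformly, and makes the needed hypotheses explicit. These are rational coefficients, which are needed anyway for $1/g!$ and the weight decomposition. They also include $S$ smooth quasi-projective over a field, so that Deninger--Murre applies; this holds in Section \ref{sect2} and for the cover $U$ used in Section \ref{sect3}.
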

    \begin{proof}
        This is a corollary of Poincaré's formula (see Section 16.5.6 in \cite{MR2062673}), the proof in the case of abelian schemes follows the same lines.
    \end{proof}
    Let us prove that the line bundle $\mathcal{E}$ (or equivalently the divisor class $E$) is symmetric:
    \begin{align*}
        [-1]^*E &= -[-1]^*p_{2_*}(c_1(\mathcal{U})^{.2}.p_1^*c_1(\mathcal{F})^{.(n-1)})\\
        &= -p_{2_*}[-1]_X^*(c_1(\mathcal{U})^{.2}.p_1^*c_1(\mathcal{F})^{.(n-1)}),
    \end{align*}
    this follows from the following cartesian diagram:
    $$\begin{tikzcd}
& X\times_S \mathcal{J} \arrow{r}{[-1]_X} \arrow{d}[swap]{p_2}
& X\times_S \mathcal{J} \arrow{d}{p_2} \\
& \mathcal{J} \arrow[swap]{r}{[-1]}
& \mathcal{J}
\end{tikzcd}$$
so
    \begin{align*}
        [-1]^*E &= -p_{2_*}([-1]_X^*(c_1(\mathcal{U})^{.2}).[-1]_X^*p_1^*c_1(\mathcal{F}^{.(n-1)})),
    \end{align*}
    and since $[-1]_X^*\mathcal{U} = \mathcal{U}^{\otimes -1}$ (this follows from the universal property of $\mathcal{U}$), and $p_1\circ [-1]_X = p_1$, we conclude that $[-1]^*E = E$, so $\mathcal{E}$ is symmetric.

    Since $\mathcal{E}$ is ample, symmetric and $e^*\mathcal{E}\simeq\mathcal{O}_S$, we obtain by Lemma \ref{lem:PF} that $\dfrac{1}{d}\dfrac{E^{.g}}{g!} = e,$ so
    \begin{align*}
        \DR(\mathcal{L}) &= \sigma^*e\\
        &= \dfrac{1}{d}\dfrac{(\sigma^*\mathcal{E})^{.g}}{g!}\\
        &= \dfrac{1}{d}\dfrac{(-\sigma^*{p_2}_*(c_1(\mathcal{U})^{.2}.p_1^*(c_1(\mathcal{F}))^{.(n-1)}))^{.g}}{g!}\\
        &= \dfrac{1}{d}\dfrac{(-\pi_*\Tilde{\sigma}^*(c_1(\mathcal{U})^{.2}.p_1^*(c_1(\mathcal{F}))^{.(n-1)}))^{.g}}{g!}\\
        &= \dfrac{1}{d}\dfrac{(-\pi_*(c_1(\mathcal{L})^{.2}.c_1(F)^{.(n-1)}))}{g!},
    \end{align*}
    here $\Tilde{\sigma}$ is the induced map in the following cartesian diagram:

$$\begin{tikzcd}
& X\times_S \mathcal{J} \arrow{r}{p_2} \arrow{d}[swap]{p_1}
& \mathcal{J} \arrow{d}[swap]{\tilde{\pi}} \\
& X \ar[u, bend right, swap,"\sigma"]\arrow[swap]{r}{\pi}
& S \ar[u, bend right, swap, "\tilde{\sigma}"]
\end{tikzcd}$$
\end{proof}
\begin{remark}
     Theorem \ref{thm:DR} remains true in the more general case where $S$ is a smooth algebraic stack over a field $k$ of characteristic $0$, and $\pi: X\longrightarrow S$ is a smooth, proper morphism of algebraic stacks that is representable by schemes. See Section \ref{sect3}.
\end{remark}
\begin{remark}\label{rk:alb}
     In fact, taking inspiration from Proposition \ref{prop:AS}, we see that for a general scheme $X$ we also have that
     $$e_{\mathcal{J}} = (-1)^g.q_{2*}\left(\dfrac{c_1(\mathcal{P}_{\mathcal{J}})^{.2g}}{(2g)!}\right),$$
     where $q_2: \mathcal{J}\times_S \mathcal{J}^{\vee}\longrightarrow \mathcal{J}^{\vee}$ is the projection map. Then pulling everything back by $\sigma$ we get another formula:
     $$\on{DR}(\mathcal{L}) = (-1)^g\cdot \dfrac{\Bar{\pi}_*(c_1(\Bar{\mathcal{L}})^{.2g})}{(2g)!},$$
     where $\Bar{\pi}: \on{Alb}(X)\longrightarrow S$ is the structure morphism of the Albanese variety $\on{Alb}(X)$, and $\Bar{\mathcal{L}}$ is the line bundle on $\on{Alb}(X)\simeq \mathcal{J}^{\vee}$ corresponding to the section $\sigma: S\longrightarrow \mathcal{J}\simeq \mathcal{J}^{\vee \vee}$.
\end{remark}
    \begin{example}\label{exp:cur}
    If $\pi: C\longrightarrow S$ is a smooth curve of genus $g$, then $E = 2\Theta$ where $\Theta$ is the theta divisor on $\mathcal{J}$ (see \cite{AST_1985__127__29_0}), which moreover induces a principal polarization of $\mathcal{J}$. 
    Then for any numerically trivial line bundle $L$ on $C$, the formula of Theorem \ref{thm:DR} becomes
    $$\DR(\mathcal{L}) = \dfrac{(-\frac{1}{2}\pi_*(c_1(\mathcal{L})^{.2}))^{.g}}{g!}.$$
    This is a corollary of Hain's formula for compact type curves, see \cite{MR3184171}.
\end{example}
\begin{example}\label{exp:prod}
Let $\pi: X\coloneqq C_1\times_SC_2\longrightarrow S$ be a product of curves $\pi_i:C_i\longrightarrow S$ of respective genera $g_i$. Assume each curve $C_i\longrightarrow S$ admits a section $s_i: S\longrightarrow C_i$.\\
In order to be able to apply Theorem \ref{thm:DR}, we need to choose a relatively ample line bundle on $X/S$, a natural candidate could be the canonical bundle $\omega_X$, which is ample if $g_1, g_2\geq 2$. In that case we find that for any numerically trivial line bundle $\mathcal{L}$:
\begin{equation*}
\DR(\mathcal{L}) = \dfrac{1}{2^{g_1+g_2}(2g_1-2)^{g_2}\cdot(2g_2-2)^{g_1}}\cdot\dfrac{\left(-\pi_*(c_1(\mathcal{L})^{.2}.c_1(\omega_X))\right)^{.g_1+g_2}}{(g_1+g_2)!}.
\end{equation*}
\begin{proof}
    We know from Theorem \ref{thm:DR} that 
    $$\DR(\mathcal{L}) = \dfrac{1}{d_{\omega_X}}\cdot\dfrac{\left(-\pi_*(c_1(\mathcal{L})^{.2}.c_1(\omega_X))\right)^{.g_1+g_2}}{(g_1+g_2)!},$$
    however computing the rank $d_{\omega_X}$ explicitely is not trivial, we'll follow a different approach.
    Consider the following cartesian diagram:
$$\begin{tikzcd}
& C_1\times_S C_2 \arrow{r}[swap]{p_2} \arrow{d}{p_1}
& C_2 \ar[l, bend right, swap, "\tilde{s}_1"] \arrow{d}[swap]{\pi_2} \\
& C_1 \ar[u, bend left,"\tilde{s}_2"]\arrow{r}{\pi_1}
& S \ar[u, swap, bend right, "s_2"] \ar[l, bend left, "s_1"]
\end{tikzcd}$$
    where $\Tilde{s}_i$ is the map induced by $s_i$. Any numerically trivial line bundle $\mathcal{L}$ on $X$ decomposes uniquely into $\mathcal{L}\simeq p_1^*\mathcal{L}_1\otimes p_2^*\mathcal{L}_2$, where $\mathcal{L}_1$ and $\mathcal{L}_2$ are numerically trivial line bundles on $C_1$ and $C_2$ respectively. We can see this by taking $\mathcal{L}_1 = \tilde{s}_2^*\mathcal{L}$ and $\mathcal{L}_2 = \tilde{s}_1^*\mathcal{L}$.
    Then for any such $\mathcal{L}$:
    \begin{equation*}
        \DR(\mathcal{L}) = \sigma^*e = (\sigma_1, \sigma_2)^*(e_1, e_2) = \sigma_1^*e\cdot \sigma_2^*e_2 = \DR(\mathcal{L}_1)\cdot \DR(\mathcal{L}_2),
    \end{equation*}
    where $\sigma_i$ denotes the section of $\mathcal{J}_{C_i}\coloneqq \on{Pic}^0_{C_i/S}$ corresponding to $\mathcal{L}_i$, and $e_i$ is the zero section of $\mathcal{J}_{C_i}$ (note that $\mathcal{J}\coloneqq \Pic\simeq \mathcal{J}_{C_1}\times_S\mathcal{J}_{C_2}$).
    Using the DR formula for smooth curves (Example \ref{exp:cur}), we obtain:
    \begin{align*}
        \DR(\mathcal{L}) &= \dfrac{(-\frac{1}{2}\pi_{1*}(c_1(\mathcal{L}_1)^{.2}))^{.g_1}}{g_1!}\cdot \dfrac{(-\frac{1}{2}\pi_{2*}(c_1(\mathcal{L}_2)^{.2}))^{.g_2}}{g_2!}\\
        &= \dfrac{(-1)^{g_1+g_2}}{2^{g_1+g_2}.g_1!g_2!}\cdot (\pi_{1*}(c_1(\mathcal{L}_1)^{.2}))^{.g_1}\cdot (\pi_{2*}(c_1(\mathcal{L}_2)^{.2}))^{.g_2}.
    \end{align*}
    Our next step will be to expand $\left(\pi_*(c_1(\mathcal{L})^{.2}.c_1(\omega_X))\right)^{.g_1+g_2}$, and deduce $d_{\omega_X}$ by identification with the formula above.
    
    For convenience, let us denote by $L_i$ the class $c_1(\mathcal{L}_i)$, and by $D_i$ the class $c_1(\omega_{C_i})$.\\
    First, note that for any cycles $\alpha, \beta$ on $C_1$ and $C_2$ respectively:
    \begin{align*}
        \pi_*(p_1^*\alpha\cdot p_2^*\beta)&= (\pi_1\circ p_1)_*(p_1^*\alpha\cdot p_2^*\beta)\\
        \text{(projection formula)}&= \pi_{1*}(\alpha\cdot p_{1*}p_2^*\beta)\\
        &= \pi_{1*}(\alpha\cdot \pi_1^*\pi_{2*}\beta)\\
        \text{(projection formula)} &= \pi_{1*}\alpha\cdot \pi_{2*}\beta.
    \end{align*}
    Using this, and the fact that $\mathcal{L}\simeq p_1^*\mathcal{L}_1\otimes p_2^*\mathcal{L}$ and $\omega_X\simeq p_1^*\omega_{C_1}\otimes p_2^*\omega_{C_2}$, we obtain when expanding $\pi_*(c_1(\mathcal{L})^{.2}.c_1(\omega_X))$:
    \begin{align*}
        \pi_*(c_1(\mathcal{L})^{.2}.c_1(\omega_X)) &= \pi_{1*}L_1^{.2}\cdot\pi_{2*}D_2 + \pi_{1*}D_1\cdot \pi_{2*}L_2^{.2}\\
        &= (2g_2-2)\pi_{1*}L_1^{.2} + (2g_1-2)\pi_{2*}L_2^{.2},
    \end{align*}
    the remaining terms vanish due to dimension or degree reasons. We can now raise everything to $g_1+g_2$:
    \begin{equation*}
        \left(\pi_*(c_1(\mathcal{L})^{.2}.c_1(\omega_X))\right)^{g_1+g_2} = \sum_{k=0}^{g_1+g_2} {{g_1+g_2}\choose k} (2g_2-2)^k(2g_1-2)^{g_1+g_2-k}(\pi_{1*}L_1^{.2})^{.k}\cdot(\pi_{2*}L_2^{.2})^{.g_1+g_2-k}.
    \end{equation*}
    Since the $(\pi_{i*}L_i^{2})^{.k}$ vanish when $k> g_i$ (due to Lemma \ref{lem:PF}), we get:
    \begin{equation*}
        \left(\pi_*(c_1(\mathcal{L})^{.2}.c_1(\omega_X))\right)^{g_1+g_2} = {g_1+g_2\choose g_1}(2g_2-2)^{g_1}(2g_1-2)^{g_2}(\pi_{1*}L_1^{.2})^{.g_1}\cdot(\pi_{2*}L_2^{.2})^{.g_2}.
    \end{equation*}
    By identifying with the previous formula for $\DR(\mathcal{L})$, we conclude that:
    \begin{equation*}
\DR(\mathcal{L}) = \dfrac{1}{2^{g_1+g_2}(2g_1-2)^{g_2}\cdot(2g_2-2)^{g_1}}\cdot\dfrac{\left(-\pi_*(c_1(\mathcal{L})^{.2}.c_1(\omega_X))\right)^{.g_1+g_2}}{(g_1+g_2)!}.
\end{equation*}
\end{proof}

We can also choose to use the ample line bundle $\mathcal{F} \coloneqq \mathcal{O}(p_1^* [s_1(S)] + p_2^*[s_2(S)])$ instead of the canonical bundle, in which case we get the following formula:

$$\DR(\mathcal{L}) = \dfrac{(-\frac{1}{2}\pi_*(c_1(\mathcal{L})^{.2}.c_1(\mathcal{F})))^{.g_1+g_2}}{(g_1+g_2)!}.$$

The proof of this follows the same lines as before, we just need to compute the rank of the vector bundle $\tilde{\pi}_*\mathcal{E}$. This can be done by direct computation, or again by using that $\DR(\mathcal{L}) = \DR(\mathcal{L}_1)\cdot \DR(\mathcal{L}_2)$, expanding $\pi_*(c_1(\mathcal{L})^{.2}.c_1(\mathcal{F})))^{.g_1+g_2}$ and identifying the formulae in the end.
\end{example}

\section{Generalization to stacks}\label{sect3}
Let us now assume that $S$ is a smooth connected algebraic stack over a field $k$ of characteristic $0$.

Let $\pi: X\longrightarrow S$ be a smooth, proper morphism of algebraic stacks of relative dimension $n$ that is representable by schemes.
\begin{definition}
    A line bundle $\mathscr{F}$ on $X/S$ is said to be relatively ample if for any map $U\longrightarrow S$ from an affine scheme $U/k$, the pullback of $\mathscr{F}$ to $X\times_SU$ is a relatively ample line bundle on $X\times_SU\longrightarrow U$.
\end{definition}
Suppose that there exists a relatively ample line bundle $\mathscr{F}$ on $X/S$.\\
For any algebraically trivial line bundle $\mathscr{L}$ on $X$ we define the double ramification cycle of $\mathscr{L}$ as follows:
$$\DR(\mathscr{L}) \coloneqq \sigma^!e \in \on{CH}_{\OP}^g(S),$$ 
where $\on{CH}^*_{\OP}(S)$ denotes the operational Chow groups of $S$, as defined in \cite{MR4430955}, $g$ is the relative dimension of  the Picard algebraic space $\mathcal{J}\coloneqq\on{\mathcal{P}ic}^0_{X/S}$ over $S$, and $\sigma, e$ are the sections of $\mathcal{J}\longrightarrow S$ corresponding to $\mathscr{L}$ and $\mathscr{O}_X$ respectively.
\begin{remark}
    Here again we can extend the definition to numerically trivial line bundles.
\end{remark}
As we did with schemes, to compute $\DR(\mathscr{L})$ we first define a cycle class $E$ on $\mathcal{J}$ in the following way: 
$$E \coloneqq -{p_2}_*(c_1(\mathscr{U})^{.2}.p_1^*(c_1(\mathscr{F}))^{.(n-1)})\in \on{CH}^1_{\OP}(\mathcal{J}),$$ where $\mathscr{U}$ is the universal line bundle of $X$ rigidified along some section $a \colon S\longrightarrow X$.

\begin{lemma}\label{lem:stackAM}
    There exists a relatively ample, symmetric line bundle $\mathcal{E}$ on $\mathcal{J}$ and an isomorphism $\varphi:e^*\mathcal{E}\xrightarrow{\sim} \mathcal{O}_S$, where $e: S\longrightarrow \mathcal{J}$ is the zero section, such that $c_1(\mathcal{E}) = E$.
\end{lemma}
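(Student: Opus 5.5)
Rather than choose a Cartier divisor representing $E$, which is not available on a stack, I will construct $\mathcal{E}$ canonically as a line bundle and then check that its first Chern class is $E$. The natural candidate is a Deligne pairing, or equivalently a determinant-of-cohomology line bundle. I will use the Deligne--Elkik intersection bundle $\langle \mathscr{U},\mathscr{U},\mathscr{F},\dots,\mathscr{F}\rangle_{X\times_S\mathcal{J}/\mathcal{J}}$, with $\mathscr{F}$ appearing $n-1$ times. This is a line bundle on $\mathcal{J}$, defined for the flat proper morphism $p_2$ of relative dimension $n$, and its first Chern class is $p_{2*}(c_1(\mathscr{U})^{.2}.p_1^*c_1(\mathscr{F})^{.(n-1)})$. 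I then set
\[
\mathcal{E} := \langle \mathscr{U},\mathscr{U},\mathscr{F},\dots,\mathscr{F}\rangle^{\otimes -1}.
\]
Because the pairing is functorial and commutes with arbitrary base change, it descends along a smooth atlas $U\to S$ (and an atlas of $\mathcal{J}$) to the stack. The same base-change property gives the identity $c_1(\mathcal{E})=E$ in $\on{CH}^1_{\OP}(\mathcal{J})$, since operational classes are determined by their pullbacks to schemes.

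Next I need the rigidification $\varphi$. Pulling back along $e$ gives $e^*\mathscr{U}\simeq\mathcal{O}_X$ canonically, using the rigidification along $a$. By multilinearity of the pairing, $e^*\mathcal{E}\simeq\langle\mathcal{O},\mathcal{O},\mathscr{F},\dots\rangle^{-1}$, and a pairing with a trivial entry is canonically trivialised. This gives $\varphi$.

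Symmetry works the same way. The universal property yields $[-1]_X^*\mathscr{U}\simeq\mathscr{U}^{\otimes-1}$ compatibly with the rigidifications. By bilinearity, the pairing $\langle\mathscr{U}^{-1},\mathscr{U}^{-1},\dots\rangle$ is canonically isomorphic to $\langle\mathscr{U},\mathscr{U},\dots\rangle$. Hence $[-1]^*\mathcal{E}\simeq\mathcal{E}$.

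Relative ampleness can be checked after pullback to affine schemes $U\to S$, because ampleness is fpqc-local on the base. Since the construction commutes with base change, $\mathcal{E}_U$ is the bundle $\mathcal{O}(\mathbb{E})$, up to a line bundle pulled back from $U$, whose class is $E_U$. That bundle is relatively ample by Lemma \ref{lem:AM}. The lemma was stated for smooth quasi-projective $S$, but its proof only uses fibrewise properties. Ampleness on fibres, together with properness of $\mathcal{J}_U/U$, gives relative ampleness.

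I expect the main obstacle to be this last step. First, Lemma \ref{lem:AM} has to be made to apply over an arbitrary affine $U$. I would handle this by noting that relative ampleness can be checked on geometric fibres, and that over a field Lemma \ref{lem:AM} holds with $S$ a point. Second, I must confirm that $\mathcal{J}$ is an abelian algebraic space over $S$, so that ampleness makes sense. This holds because, as in Section \ref{sect2}, $\mathcal{J}$ is smooth and proper in characteristic $0$. The Deligne pairing formalism for a morphism of relative dimension $n$ may also need justification; if so, I would replace it with the Knudsen--Mumford determinant expression that computes the same class via Grothendieck--Riemann--Roch.
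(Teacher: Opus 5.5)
Your argument is correct, but it builds $\mathcal{E}$ differently from the paper. The paper makes no global choice. For each smooth $U\to S$ it takes a Cartier divisor representing $E_U$ on the smooth scheme $\mathcal{J}_U$ (where $\operatorname{CH}^1=\operatorname{Pic}$) and rigidifies it along $e_U$ as in Section \ref{sect2}. It then glues, using that two rigidified line bundles with the same class on $\mathcal{J}_{U\times_S V}$ are uniquely isomorphic: automorphisms of a line bundle on an abelian scheme are units of the base, and the rigidification kills them, so the cocycle condition is automatic. Ampleness and symmetry are then inherited chart by chart from Lemma \ref{lem:AM} and from the computation in the proof of Theorem \ref{thm:DR}.

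You instead write down $\mathcal{E}$ canonically as the inverse Deligne--Elkik intersection bundle. Descent, the trivialisation $\varphi$ and the symmetry isomorphism then follow directly from base change and multilinearity, and no local choices need to be compared. The main gain is the identity $c_1(\mathcal{E})=E$ in $\operatorname{CH}^1_{\OP}(\mathcal{J})$: you get it after pullback to \emph{every} scheme mapping to $\mathcal{J}$, which is what an operational identity means. The gluing argument only has it on the smooth charts $\mathcal{J}_U$ and leaves the passage to the stack implicit, and Chern classes are not in general detected on a cover.

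The price is heavier input. You need Elkik's pairing in relative dimension $n$ together with its Chern class formula in operational form; your determinant-of-cohomology fallback gives the identity only rationally, which is harmless here. Ampleness in both versions rests on Lemma \ref{lem:AM}. Your reduction to geometric fibres is the cleaner way to apply it, since over $\operatorname{Spec}\overline{\kappa(u)}$ the lemma applies verbatim (the base is a point and a rational point exists). You can therefore drop the detour through $\mathcal{O}(\mathbb{E})$ ``up to a line bundle from $U$'', which only makes sense for smooth $U$ anyway.
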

\begin{proof}
    We define $\mathcal{E}$ locally on $\mathcal{J}$:
    
    Let $U\longrightarrow S$ be a smooth map from a scheme $U/k$, since $S$ is smooth, $U$ is by definition a smooth scheme over $k$. Let $E_U$ be the following divisor class on $\mathcal{J}_U\coloneqq \mathcal{J}_{X\times_SU}:$
    \begin{equation*}
        E_U\coloneqq -p_{2*}(c_1(\mathscr{U})^{.2}.p_1^*c_1(\mathscr{F}_U)^{.(n-1)})\in \on{CH}^1(\mathcal{J}_U).
    \end{equation*}
    Choose a rigidified line bundle $(\mathcal{E}_U, \varphi_U: e_U^*\mathcal{E}_U\xlongrightarrow{\sim} \mathcal{O}_U)$ whose divisor class is $E_U$ (just like we did in the paragraph before Theorem \ref{thm:DR}).
    
    Let $U\longrightarrow S$ and $V\longrightarrow S$ be two smooth covers of $S$ by schemes. By a diagram chase one can show that the rigidified line bundles $p_U^*(\mathcal{E}_U)$ and $p_V^*(\mathcal{E}_V)$ are uniquely isomorphic, where $p_U: \mathcal{J}_{U\times_SV}\longrightarrow \mathcal{J}_U$ and $p_V:\mathcal{J}_{U\times_SV}\longrightarrow \mathcal{J}_V$ are the canonical maps. This shows that we can glue the line bundles $\mathcal{E}_U$ uniquely to obtain a rigidified line bundle $\mathcal{E}$ on $\mathcal{J}$, and since each $\mathcal{E}_U$ is relatively ample (by Lemma \ref{lem:AM}) and symmetric, the line bundle $\mathcal{E}$ is relatively ample over $\mathcal{J}$ and symmetric as well.
\end{proof}

\begin{theorem}\label{thm:stackDR}
    For any numerically trivial line bundle $\mathscr{L}$ on $X$, we have:
    \begin{equation}\label{DReq}
        \DR(\mathscr{L}) = \dfrac{1}{d}.\dfrac{\left(-\pi_*(c_1(\mathscr{L})^{.2}.c_1(\mathscr{F})^{.(n-1)}\right)^{.g}}{g!},
    \end{equation}
    where $d := \on{rk} \tilde{\pi}_*\mathcal{E}\in \mathbb{Z}_{> 0}$.
\end{theorem}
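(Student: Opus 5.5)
The plan is to mimic the proof of Theorem \ref{thm:DR}, replacing the scheme-theoretic ingredients by their operational counterparts on the stack $S$. First we reduce to algebraically trivial $\mathscr{L}$. Some power $\mathscr{L}^{\otimes r}$ is algebraically trivial. The analogue of Lemma \ref{lem:DR_scaling} gives $\DR(\mathscr{L}^{\otimes r}) = r^{2g}\DR(\mathscr{L})$. Its proof goes through verbatim, since the Fourier-transform identities of \cite{Deninger1991} can be checked after smooth base change $U\to S$, and operational classes are determined by such pullbacks. Meanwhile the right-hand side of \eqref{DReq} scales by $(r^2)^g = r^{2g}$. So it suffices to treat algebraically trivial $\mathscr{L}$, for which $\sigma$ is a genuine section of $\mathcal{J}\to S$.

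Next we use Lemma \ref{lem:stackAM} to obtain a relatively ample, symmetric, rigidified line bundle $\mathcal{E}$ on $\mathcal{J}$ with $c_1(\mathcal{E}) = E$. We then establish the stacky Poincaré formula
\[
\dfrac{1}{d}\dfrac{c_1(\mathcal{E})^{.g}}{g!} = e \in \on{CH}^g_{\OP}(\mathcal{J}),
\]
where $e$ denotes the operational class of the zero section, a regular embedding. Operational Chow classes on an algebraic stack in the sense of \cite{MR4430955} are compatible with, and detected by, smooth pullback to schemes. So it is enough to verify the identity after pulling back along each smooth $U\to S$ with $U$ a smooth scheme, and there it is exactly Lemma \ref{lem:PF} applied to $(\mathcal{E}_U,\varphi_U)$. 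The rank $d$ is locally constant, hence constant on connected $S$. Formation of $\tilde{\pi}_*\mathcal{E}$ commutes with flat base change, so the $d$ computed on $U$ agrees with the global one. We may also need $U$ quasi-projective, as in Section \ref{sect2}. For this we cover by affine $U$, which is harmless because operational classes are tested on arbitrary schemes mapping to $S$.

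Finally we pull back along $\sigma$. This gives $\DR(\mathscr{L}) = \sigma^! e = \frac{1}{d}\frac{(\sigma^*c_1(\mathcal{E}))^{g}}{g!}$, using that $\sigma^!$ is a ring map on operational classes coming from $\mathcal{J}$. It then remains to show
\[
\sigma^*E = -\pi_*\bigl(c_1(\mathscr{L})^{.2}.c_1(\mathscr{F})^{.(n-1)}\bigr).
\]
The map $p_2$ is proper, flat and representable, so proper pushforward of operational classes commutes with pullback along the cartesian square formed by $\sigma$ and $\tilde{\sigma}$. Since $\tilde{\sigma}^*\mathscr{U}\simeq\mathscr{L}$ up to a line bundle pulled back from $S$, this identity follows. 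The rigidification along $a$ removes that ambiguity. Alternatively, a twist by $\pi^*M$ contributes terms $\pi_*(\pi^*c_1(M)\cdot c_1(\mathscr L)\cdots)$ that vanish by degree, because $\mathscr L$ is numerically trivial on fibres.

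The main obstacle I expect is the second step. One has to justify that the Poincaré identity and the operational pushforward $p_{2*}$ are well defined on the stack and can be checked smooth-locally. This needs the descent property of operational Chow groups from \cite{MR4430955}, together with the fact that $\mathcal{J}\to S$ is representable by abelian schemes after base change. I would handle it by working throughout with the definition of operational classes as compatible systems over all $T\to S$ with $T$ a scheme. Each identity then reduces to the scheme case already proved in Section \ref{sect2}.
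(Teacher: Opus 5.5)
Your proof has a genuine gap at the step you flag yourself: the identity $\frac{1}{d}\frac{c_1(\mathcal{E})^{.g}}{g!} = e$ in $\on{CH}^g_{\OP}(\mathcal{J})$. The principle you rely on, that operational classes are detected by smooth pullback to schemes, is false even with $\mathbb{Q}$-coefficients. Two counterexamples:
\begin{itemize}
\item On $B\mathbb{G}_m$, the first Chern class of the universal line bundle is nonzero, but it vanishes on the smooth atlas $\on{Spec} k\to B\mathbb{G}_m$.
\item Already for schemes, the point class of $\mathbb{P}^1$ restricts to zero on both charts $\mathbb{A}^1$ of the standard cover.
\end{itemize}
Flat pullback along smooth surjections is not injective on Chow groups, so checking Lemma \ref{lem:PF} on each $\mathcal{J}_U$ tells you nothing about $\mathcal{J}$. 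Your fallback, testing on all schemes $T\to\mathcal{J}$, is the correct definition. However, such $T$ are in general singular, and Lemma \ref{lem:PF} is only available over smooth quasi-projective bases. So ``each identity reduces to the scheme case of Section \ref{sect2}'' does not follow. Your reduction from numerically to algebraically trivial bundles uses the same false principle, but it can be avoided. Once the formula holds for algebraically trivial bundles, $\DR(\mathscr{L}^{\otimes r}) = r^{2g}\DR(\mathscr{L})$ follows from the formula itself, since the right-hand side scales by $r^{2g}$.

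The missing ingredient is descent along \emph{proper surjective} maps rather than smooth ones. Proper surjective pushforward is surjective on rational Chow groups (Proposition B.19 of \cite{MR4430955}), so pullback along a proper surjective map is injective on rational operational Chow groups. The paper applies this universally:
\begin{itemize}
\item By Lemma \ref{lem:stackAM}, $(\mathcal{J},\mathcal{E},\varphi)$ is classified by a map $S\to\mathcal{N}_{g,d}$. Here $\mathcal{N}_{g,d}$ is the stack of abelian schemes with a symmetric, relatively ample, rigidified line bundle whose pushforward has rank $d$.
\item $\mathcal{N}_{g,d}$ is finite \'etale over $\mathcal{A}_{g,d}$, hence separated of finite type. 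Olsson's theorem plus resolution then gives a proper surjective $U\to\mathcal{N}_{g,d}$ with $U$ smooth quasi-projective.
\item Lemma \ref{lem:PF} holds on $\chi_U$, injectivity transfers it to the universal abelian scheme $\chi$, and it pulls back to $\mathcal{J}$.
\end{itemize}

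You could also repair your own route without $\mathcal{N}_{g,d}$. For each test scheme $T\to\mathcal{J}$:
\begin{itemize}
\item Choose a proper surjective $T'\to T$ with $T'$ smooth quasi-projective, using Chow's lemma and resolution.
\item Apply Lemma \ref{lem:PF} to the abelian scheme $\mathcal{J}\times_S T'\to T'$. Here $T'\to\mathcal{J}$ gives a section, and the zero section is the base change of $e$.
\item Conclude using surjectivity of $\on{CH}_*(T')_{\mathbb{Q}}\to\on{CH}_*(T)_{\mathbb{Q}}$.
\end{itemize}
This uses the same key idea as the paper, applied test object by test object instead of universally. Your remaining steps are fine: pulling back along $\sigma$, identifying $\sigma^*E$, and the degree argument for twists by $\pi^*M$.
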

\begin{proof}
    The key idea is to show that 
    \begin{equation}\label{AMeq}
        \dfrac{E^{.g}}{g!} = d.e
    \end{equation}
    in $\on{CH}_{\OP}^g(\mathcal{J})$. We obtain the desired formula by pulling \ref{AMeq} back to $S$ via $\sigma$.
    
    Let us start by looking at the moduli stack $\mathcal{N}_{g, d}$ representing the functor which to a scheme $T$ associates the groupoid of elements $(A_T, L_T, \phi_T)$ where:
    \begin{itemize}
        \item $\pi_T:A_T\longrightarrow T$ is an abelian scheme of dimension $g$.
        \item $L_T$ is a relatively ample and symmetric line bundle on $A_T/T$, such that $\on{rk}\; \pi_{T*}L_T = d$.
        \item $\phi_T$ is the choice of an isomorphism $e_T^*L_T \xrightarrow{\sim} \mathcal{O}_T$, where $e_T:T\longrightarrow A_T$ is the zero section of $A_T$.
    \end{itemize}
    There is a canonical map $\varphi: \mathcal{N}_{g, d}\longrightarrow \mathcal{A}_{g, d}$, where $\mathcal{A}_{g, d}$ is the moduli space of abelian varieties of dimension $g$ with a polarization of degree $d^2$. In fact, $\mathcal{N}_{g, d}$ is a principal homogeneous space over $\mathcal{A}_{g, d}$, under the group of $2$-torsion points of the dual universal abelian variety $\chi_{g, d}^{\vee}[2]$ (see Section 7, Chapter 4 of \cite{MR1083353}), with the action given on the fibre of a $T$-point $(A, \varphi: A\longrightarrow A^{\vee})$ by:
    \begin{align*}
        \chi_{g, d}^{\vee}[2](A,\varphi) \times \mathcal{N}_{g, d}(A,\varphi) &\longrightarrow \mathcal{N}_{g, d}(A, \varphi)\\
        (F, L) &\longmapsto F\otimes L.
    \end{align*}
    
    \noindent In particular, the map $\varphi$ is finite étale; and since $\mathcal{A}_{g, d}$ is separated and of finite type over $k$, we conclude that $\mathcal{N}_{g, d}$ is separated and of finite type over $k$ as well.
    
    Let $\chi\longrightarrow \mathcal{N}_{g, d}$ denote the universal abelian variety, and $\mathcal{L}$ the universal line bundle. By Theorem 1.1 of \cite{OLSSON200593}, there exists a proper surjective map $U\longrightarrow \mathcal{N}_{g, d}$ from a quasi-projective scheme over $k$, which we may also assume is smooth by resolution of singularities (See \cite{da38f13a-4476-31dd-93dc-ce446d771081}).
    Let $\chi_U$ denote the fibre product $\chi\times_{\mathcal{N}_{g, d}}U$, and $\rho:\chi_{U}\longrightarrow \chi$ be the induced map on abelian schemes.
    \begin{lemma}\label{lem: INJ}
        The pullback map $\rho^*:\on{CH}_\OP^*(\chi)\longrightarrow \on{CH}_{\OP}^*(\chi_U)$ is injective.
    \end{lemma}
    \begin{proof}[Proof of Lemma \ref{lem: INJ}]
    Let $\alpha$ be a class in $\on{CH}_{\OP}^*(\chi)$ such that $\rho^*\alpha=0$ in $\on{CH}_{\OP}^*(\chi_U)$. Let us prove that $\alpha = 0$.
    
    Let $\mathcal{V}\longrightarrow \chi$ be a morphism from a finite type algebraic space $\mathcal{V}$ over $k$, we need to show that $\alpha.Z = 0$ for any $Z\in \on{CH}^*(\mathcal{V})$ (where $\on{CH}^*(\mathcal{V})$ is defined as in \cite{MR4430955}).
    Since the map $U\longrightarrow \mathcal{N}_{g, d}$ is proper and surjective, it follows that $\rho$ is also proper and surjective, and so is the induced map $\Tilde{\rho}: \chi_U\times_{\chi}\mathcal{V}\longrightarrow \mathcal{V}$. By Proposition B.19 of \cite{MR4430955}, the pushforward map
    $$\Tilde{\rho}_*: \on{CH}_*(\chi_U\times_{\chi}\mathcal{V})\longrightarrow \on{CH}_*(\mathcal{V})$$
    is surjective, so for any $Z\in \on{CH}(\mathcal{V})$ there exists $X\in \on{CH}(\chi_U\times_{\chi}\mathcal{V})$ such that $Z = \Tilde{\rho}_*W$, then
    $$\alpha.Z = \alpha.\Tilde{\rho}_*W = \Tilde{\rho}_*(\rho^*\alpha.W) = 0.$$
    \end{proof}
    Since the pullback $\rho^*\mathcal{L}$ is ample, symmetric and rigidified along the zero section on $\chi_{U} \longrightarrow U$, which is an abelian scheme over $U$, we can apply Lemma \ref{lem:PF} in this case and obtain
    \begin{equation*}
        \dfrac{(\rho^*c_1(\mathcal{L}))^{.g}}{g!} = d.e_{\chi_{U}} = \rho^*(d.e_{\chi}).
    \end{equation*}
    This and Lemma \ref{lem: INJ} imply that 
    $$\dfrac{c_1(\mathcal{L})^{.g}}{g!} = d.e_{\chi}.$$
    By Lemma \ref{lem:stackAM}, the triple $(\mathcal{J}, \mathcal{E}, \varphi)$ corresponds to a map $S\longrightarrow \mathcal{N}_{g, d}$, and we have the following cartesian diagram:
    $$\begin{tikzcd}
& \mathcal{J} \arrow{r} \arrow{d}
& \chi \arrow{d} \\
& S \arrow{r}
& \mathcal{N}_{g, d}
\end{tikzcd}$$
    then $\mathcal{E}$ is by definition the pullback of $\mathcal{L}$ via the map $\mathcal{J}\longrightarrow \chi$, which gives \ref{AMeq}. 
\end{proof}

\bibliography{refs}

@article{Deninger1991,
author = {Deninger, Christopher and Murre, Jacob},
journal = {Journal für die reine und angewandte Mathematik},
pages = {201-219},
title = {Motivic decomposition of abelian schemes and the {F}ourier transform.},
url = {http://eudml.org/doc/153379},
volume = {422},
year = {1991},
}

@incollection{AST_1985__127__29_0,
     author = {Moret-Bailly, Laurent},
     title = {M\'etriques permises},
     booktitle = {S\'eminaire sur les pinceaux arithm\'etiques : la conjecture de Mordell},
     series = {Ast\'erisque},
     pages = {29--87},
     year = {1985},
     publisher = {Soci\'et\'e math\'ematique de France},
     number = {127},
     mrnumber = {801918},
     zbl = {1182.11028},
     language = {fr},
     url = {https://www.numdam.org/item/AST_1985__127__29_0/}
}

@book {MR1644323,
    AUTHOR = {Fulton, William},
     TITLE = {Intersection theory},
    SERIES = {Ergebnisse der Mathematik und ihrer Grenzgebiete. 3. Folge. A
              Series of Modern Surveys in Mathematics [Results in
              Mathematics and Related Areas. 3rd Series. A Series of Modern
              Surveys in Mathematics]},
    VOLUME = {2},
   EDITION = {Second},
 PUBLISHER = {Springer-Verlag, Berlin},
      YEAR = {1998},
     PAGES = {xiv+470},
      ISBN = {3-540-62046-X; 0-387-98549-2},
   MRCLASS = {14C17 (14-02)},
  MRNUMBER = {1644323},
       DOI = {10.1007/978-1-4612-1700-8},
       URL = {https://doi.org/10.1007/978-1-4612-1700-8},
}

@book {MR2062673,
    AUTHOR = {Birkenhake, Christina and Lange, Herbert},
     TITLE = {Complex abelian varieties},
    SERIES = {Grundlehren der mathematischen Wissenschaften [Fundamental
              Principles of Mathematical Sciences]},
    VOLUME = {302},
   EDITION = {Second},
 PUBLISHER = {Springer-Verlag, Berlin},
      YEAR = {2004},
     PAGES = {xii+635},
      ISBN = {3-540-20488-1},
   MRCLASS = {14-02 (14H37 14Kxx 32G20)},
  MRNUMBER = {2062673},
MRREVIEWER = {Fumio\ Hazama},
       DOI = {10.1007/978-3-662-06307-1},
       URL = {https://doi.org/10.1007/978-3-662-06307-1},
}

@incollection {MR3184171,
    AUTHOR = {Hain, Richard},
     TITLE = {Normal functions and the geometry of moduli spaces of curves},
 BOOKTITLE = {Handbook of moduli. {V}ol. {I}},
    SERIES = {Adv. Lect. Math. (ALM)},
    VOLUME = {24},
     PAGES = {527--578},
 PUBLISHER = {Int. Press, Somerville, MA},
      YEAR = {2013},
      ISBN = {978-1-57146-257-2},
   MRCLASS = {14H10 (32G15)},
  MRNUMBER = {3184171},
}

@article {MR4430955,
    AUTHOR = {Bae, Younghan and Schmitt, Johannes},
     TITLE = {Chow rings of stacks of prestable curves {I}},
      NOTE = {With an appendix by Bae, Schmitt and Jonathan Skowera},
   JOURNAL = {Forum Math. Sigma},
  FJOURNAL = {Forum of Mathematics. Sigma},
    VOLUME = {10},
      YEAR = {2022},
     PAGES = {Paper No. e28, 47},
      ISSN = {2050-5094},
   MRCLASS = {14H10 (14C15 14C17)},
  MRNUMBER = {4430955},
MRREVIEWER = {Reinier\ Kramer},
       DOI = {10.1017/fms.2022.21},
       URL = {https://doi.org/10.1017/fms.2022.21},
}

@book {MR1083353,
    AUTHOR = {Faltings, Gerd and Chai, Ching-Li},
     TITLE = {Degeneration of abelian varieties},
    SERIES = {Ergebnisse der Mathematik und ihrer Grenzgebiete (3) [Results
              in Mathematics and Related Areas (3)]},
    VOLUME = {22},
      NOTE = {With an appendix by David Mumford},
 PUBLISHER = {Springer-Verlag, Berlin},
      YEAR = {1990},
     PAGES = {xii+316},
      ISBN = {3-540-52015-5},
   MRCLASS = {14K10 (11G10 14D20 14K25)},
  MRNUMBER = {1083353},
MRREVIEWER = {Min\ Ho\ Lee},
       DOI = {10.1007/978-3-662-02632-8},
       URL = {https://doi.org/10.1007/978-3-662-02632-8},
}

@article{OLSSON200593,
title = {On proper coverings of {A}rtin stacks},
journal = {Advances in Mathematics},
volume = {198},
number = {1},
pages = {93-106},
year = {2005},
note = {Special volume in honor of Michael Artin: Part I},
issn = {0001-8708},
doi = {https://doi.org/10.1016/j.aim.2004.08.017},
url = {https://www.sciencedirect.com/science/article/pii/S0001870805000423},
author = {Martin C. Olsson}
}

@article{da38f13a-4476-31dd-93dc-ce446d771081,
 ISSN = {0003486X, 19398980},
 URL = {http://www.jstor.org/stable/1970486},
 author = {Heisuke Hironaka},
 journal = {Annals of Mathematics},
 number = {1},
 pages = {109--203},
 publisher = {[Annals of Mathematics, Trustees of Princeton University on Behalf of the Annals of Mathematics, Mathematics Department, Princeton University]},
 title = {Resolution of Singularities of an Algebraic Variety Over a Field of Characteristic Zero: I},
 urldate = {2026-01-22},
 volume = {79},
 year = {1964}
}

@article{Janda_2017,
   title={Double ramification cycles on the moduli spaces of curves},
   volume={125},
   ISSN={1618-1913},
   url={http://dx.doi.org/10.1007/s10240-017-0088-x},
   DOI={10.1007/s10240-017-0088-x},
   number={1},
   journal={Publications mathématiques de l’IHÉS},
   publisher={Springer Science and Business Media LLC},
   author={Janda, F. and Pandharipande, R. and Pixton, A. and Zvonkine, D.},
   year={2017},
   month=may, pages={221–266} }

@article{bae2025countingsurfacescalabiyau4folds,
      title={Counting surfaces on {C}alabi-{Y}au 4-folds {I}: {F}oundations}, 
      author={Younghan Bae and Martijn Kool and Hyeonjun Park},
      year={2025},
      journal={arXiv preprint, arXiv:2208.09474},
      eprint={2208.09474},
      archivePrefix={arXiv},
      primaryClass={math.AG},
      url={https://arxiv.org/abs/2208.09474}, 
}

@article{Bae_2020,
   title={Tautological relations for stable maps to a target variety},
   volume={58},
   ISSN={1871-2487},
   url={http://dx.doi.org/10.4310/ARKIV.2020.v58.n1.a2},
   DOI={10.4310/arkiv.2020.v58.n1.a2},
   number={1},
   journal={Arkiv för Matematik},
   publisher={International Press of Boston},
   author={Bae, Younghan},
   year={2020},
   pages={19–38} }

@article{Buryak_2019,
   title={{DR}/{DZ} equivalence conjecture and tautological relations},
   volume={23},
   ISSN={1465-3060},
   url={http://dx.doi.org/10.2140/gt.2019.23.3537},
   DOI={10.2140/gt.2019.23.3537},
   number={7},
   journal={Geometry \& Topology},
   publisher={Mathematical Sciences Publishers},
   author={Buryak, Alexandr and Guéré, Jérémy and Rossi, Paolo},
   year={2019},
   month=dec, pages={3537–3600} }

@article{Clader_2017,
   title={Powers of the {T}heta Divisor and Relations in the Tautological Ring},
   volume={2018},
   ISSN={1687-0247},
   url={http://dx.doi.org/10.1093/imrn/rnx115},
   DOI={10.1093/imrn/rnx115},
   number={24},
   journal={International Mathematics Research Notices},
   publisher={Oxford University Press (OUP)},
   author={Clader, Emily and Grushevsky, Samuel and Janda, Felix and Zakharov, Dmitry},
   year={2017},
   month=jun, pages={7725–7754} }

@article{Fan_2020,
   title={Structures in genus-zero relative {G}romov–{W}itten theory},
   volume={13},
   ISSN={1753-8424},
   url={http://dx.doi.org/10.1112/topo.12131},
   DOI={10.1112/topo.12131},
   number={1},
   journal={Journal of Topology},
   publisher={Wiley},
   author={Fan, Honglu and Wu, Longting and You, Fenglong},
   year={2020},
   month=mar, pages={269–307} }

@article{Farkas_2016,
   title={THE MODULI SPACE OF TWISTED CANONICAL DIVISORS},
   volume={17},
   ISSN={1475-3030},
   url={http://dx.doi.org/10.1017/S1474748016000128},
   DOI={10.1017/s1474748016000128},
   number={3},
   journal={Journal of the Institute of Mathematics of Jussieu},
   publisher={Cambridge University Press (CUP)},
   author={Farkas, Gavril and Pandharipande, Rahul},
   year={2016},
   month=apr, pages={615–672} }

@article{Oberdieck_2018,
   title={Holomorphic anomaly equations and the {I}gusa cusp form conjecture},
   volume={213},
   ISSN={1432-1297},
   url={http://dx.doi.org/10.1007/s00222-018-0794-0},
   DOI={10.1007/s00222-018-0794-0},
   number={2},
   journal={Inventiones mathematicae},
   publisher={Springer Science and Business Media LLC},
   author={Oberdieck, Georg and Pixton, Aaron},
   year={2018},
   month=feb, pages={507–587} }

@InProceedings{10.1007/978-3-319-94881-2_9,
author="Pixton, Aaron",
editor="Christophersen, Jan Arthur
and Ranestad, Kristian",
title="Generalized {B}oundary {S}trata {C}lasses",
booktitle="Geometry of Moduli",
year="2018",
publisher="Springer International Publishing",
address="Cham",
pages="285--293",
isbn="978-3-319-94881-2"
}

@article{schmitt2018dimension,
  title={Dimension theory of the moduli space of twisted $ k $-differentials},
  author={Schmitt, Johannes},
  journal={Documenta Mathematica},
  volume={23},
  pages={871--894},
  year={2018}
}

@article{Tseng_2020,
   title={Higher genus relative and orbifold {G}romov–{W}itten
invariants},
   volume={24},
   ISSN={1465-3060},
   url={http://dx.doi.org/10.2140/gt.2020.24.2749},
   DOI={10.2140/gt.2020.24.2749},
   number={6},
   journal={Geometry \& Topology},
   publisher={Mathematical Sciences Publishers},
   author={Tseng, Hsian-Hua and You, Fenglong},
   year={2020},
   month=dec, pages={2749–2779} }

@article{blot2024strong,
  title={On the strong {DR}/{DZ} equivalence conjecture},
  author={Blot, Xavier and Lewanski, Danilo and Shadrin, Sergey},
  journal={arXiv preprint, arXiv:2405.12334},
  year={2024}
}

@article{Buryak_Rossi_2023, title={A GENERALISATION OF {W}ITTEN’S CONJECTURE FOR THE {P}IXTON CLASS AND THE NONCOMMUTATIVE {K}d{V} HIERARCHY}, volume={22}, DOI={10.1017/S1474748022000354}, number={6}, journal={Journal of the Institute of Mathematics of Jussieu}, author={Buryak, Alexandr and Rossi, Paolo}, year={2023}, pages={2987–3009}}

@article{cavalieri2016hurwitz,
  title={{H}urwitz theory and the double ramification cycle},
  author={Cavalieri, Renzo},
  journal={Japanese Journal of Mathematics},
  volume={11},
  number={2},
  pages={305--331},
  year={2016},
  publisher={Springer}
}

@article{molcho2024case,
  title={A case study of intersections on blowups of the moduli of curves},
  author={Molcho, Sam and Ranganathan, Dhruv},
  journal={Algebra \& Number Theory},
  volume={18},
  number={10},
  pages={1767--1816},
  year={2024},
  publisher={Mathematical Sciences Publishers}
}

@article{van2021gromov,
  title={{G}romov--{W}itten theory of {K}3 surfaces and a {K}aneko--{Z}agier equation for {J}acobi forms},
  author={van Ittersum, Jan-Willem and Oberdieck, Georg and Pixton, Aaron},
  journal={Selecta Mathematica},
  volume={27},
  number={4},
  pages={64},
  year={2021},
  publisher={Springer}
}

@article{holmes2019multiplicativity,
  title={Multiplicativity of the double ramification cycle},
  author={Holmes, David and Pixton, Aaron and Schmitt, Johannes},
  journal={Documenta Mathematica},
  volume={24},
  pages={545--562},
  year={2019},
  publisher={Deutsche Mathematiker-Vereinigung}
}

@book{kleiman1968algebraic,
  title={Algebraic cycles and the {W}eil conjectures},
  author={Kleiman, Steven L.},
  year={1968},
  publisher={Columbia university, Department of mathematics}
}

@article{bae2023pixton,
  title={Pixton’s formula and {A}bel--{J}acobi theory on the {P}icard stack},
  author={Bae, Younghan and Holmes, David and Pandharipande, Rahul and Schmitt, Johannes and Schwarz, Rosa},
  journal={Acta Mathematica},
  volume={230},
  number={2},
  pages={205--319},
  year={2023},
  publisher={Lehigh University Bethlehem, Penn., USA}
}

@book {MR2222646,
    AUTHOR = {Fantechi, Barbara and G\"ottsche, Lothar and Illusie, Luc and
              Kleiman, Steven L. and Nitsure, Nitin and Vistoli, Angelo},
     TITLE = {Fundamental algebraic geometry},
    SERIES = {Mathematical Surveys and Monographs},
    VOLUME = {123},
      NOTE = {Grothendieck's FGA explained},
 PUBLISHER = {American Mathematical Society, Providence, RI},
      YEAR = {2005},
     PAGES = {x+339},
      ISBN = {0-8218-3541-6},
   MRCLASS = {14-06 (14A15 14D15 14F20)},
  MRNUMBER = {2222646},
MRREVIEWER = {Liam\ O'Carroll},
       DOI = {10.1090/surv/123},
       URL = {https://doi.org/10.1090/surv/123},
}
\bibliographystyle{alpha}
\end{document}